\newcommand{\rrvert}{\vert}
\newcommand{\llvert}{\vert}
\newcommand{\R}{\mathbb R}
\newcommand{\Z}{\mathbb Z}
\newcommand{\PR}{\mathbb P}
\newcommand{\E}{\mathbb E}
\newtheorem{theorem}{Theorem}[section]
\newtheorem{corollary}[theorem]{Corollary}
\newtheorem{lemma}[theorem]{Lemma}
\newtheorem{proposition}[theorem]{Proposition}
\begin{document}
\begin{frontmatter}

\title{Phase transitions in exponential random graphs}
\runtitle{Phase transitions in random graphs}

\begin{aug}
\author[A]{\fnms{Charles} \snm{Radin}\thanksref{t2}\ead[label=e1]{radin@math.utexas.edu}\ead[label=u1,url]{http://ma.utexas.edu/users/radin}}
\and
\author[A]{\fnms{Mei} \snm{Yin}\corref{}\ead[label=e2]{myin@math.utexas.edu}\ead[label=u2,url]{http://ma.utexas.edu/users/myin}}
\runauthor{C. Radin and M. Yin}
\affiliation{University of Texas at Austin}
\address[A]{Department of Mathematics\\
University of Texas at Austin\\
Austin, Texas 78712\\
USA\\
\printead{e1}\\
\hphantom{E-mail: }\printead*{e2}\\
\printead{u1}\\
\hphantom{URL: }\printead*{u2}} 
\end{aug}

\thankstext{t2}{Supported by NSF Grant DMS-07-00120.}

\received{\smonth{9} \syear{2011}}
\revised{\smonth{11} \syear{2012}}

%
\begin{abstract}
We derive the full phase diagram for a large family of two-parameter
exponential random graph models, each containing a first order
transition curve ending in a critical point.
\end{abstract}

%
\begin{keyword}[class=AMS]
\kwd[Primary ]{05C80}
\kwd[; secondary ]{82B26}
\end{keyword}
\begin{keyword}
\kwd{Graph limits}
\kwd{phase transitions}
\kwd{dense random graphs}
\end{keyword}

\end{frontmatter}

\section{Introduction}
We will treat a class of models of large,
``dense'' random graphs, that is, simple graphs on $n$ vertices in
which the
average number of edges is of order $n^2$. More specifically we will
consider two-parameter families of exponential random graphs in which
dependence between the
random edges is defined through certain finite subgraphs $H_2$, in
imitation of
the use of potential energy to provide dependence between
particle states in a grand canonical ensemble
of statistical physics. Intuitively, the two parameters allow one to
adjust the density of edges and the density of subgraphs $H_2$, and
analyze the extent to which specific values of these densities ``interfere''
with one another.
Exponential random graphs have been widely
studied (see \cite{F1,F2} for a range of recent work) since
the pioneering work on the independent case by Erd\H{o}s and R\'enyi
\cite{ER}.
We will concentrate on the phenomenon of phase transitions
which can emerge for dependent variables: a sharp, unambiguous
partition of
parameter ranges separating those values in which
changes in parameters lead to smooth
changes in the two densities, from those special parameter values
where the
response in the densities is singular. This subject has attracted
enormous interest in mathematics, as well as in various
applied disciplines
(some references may be found, e.g.,
in H\"{a}ggstr\"{o}m and Jonasson~\cite{HJ}). Analyses using
mean-field and other uncontrolled approximations (see, e.g.,
\mbox{\cite{PN1,PN2}}) have predicted such partitions, but with some
qualitative error which we discuss in the last section.
There has
recently been important progress by Chatterjee and Diaconis \cite{CD},
including the first rigorous proof of singular dependence on
parameters. We will extend
their result both in the class of models and parameter values under
control and provide an appropriate formalism of phase structure for
such models.

\section{Statement of results}
We consider the class of models in which the probability of the simple graph
$G_n$ on $n$ vertices is given by
%
\begin{equation}
\PR_n^{\beta_1,\beta_2}(G_n)=e^{n^2[\beta_1 t(H_1,G_n)+\beta_2
t(H_2,G_n)-\psi_n]},
\end{equation}
where $H_1$ is an edge, $H_2$ is any finite simple graph with $p\ge
2$ edges, $\psi_n=\psi_n(\beta_1,\beta_2)$ is the normalization
constant, $t(H,G_n)$ is
the density of graph homomorphisms $H \to G_n$
%
\begin{equation}
t(H,G_n)=\frac{|{\operatorname{hom}(H,G_n)}|}{|V(G_n)|^{|V(H)|}}
\end{equation}
and $V(\cdot)$ denotes the vertex set.
Expectation of a real function
of a random graph is denoted $\E_{\beta_1,\beta_2}\{\cdot\}$.
Our main results are the following.
%
\begin{theorem}
\label{One}
For any allowed $H_2$, the pointwise limit
%
\begin{equation}
\psi_\infty(\beta_1,\beta_2)=\lim
_{n\to
\infty}\psi_n(\beta_1,
\beta_2)
\end{equation}
exists and is analytic at all $(\beta_1,\beta_2)$ in the upper
half-plane ($\beta_2\geq0$) except on
a certain continuous curve $\beta_2=q(\beta_1)$ which includes the endpoint
%
\begin{equation}
\bigl(\beta_1^c, \beta_2^c
\bigr)= \biggl(\frac{1}{2}\log(p-1)-\frac
{p}{2(p-1)}, \frac{p^{p-1}}{2(p-1)^p}
\biggr).
\end{equation}
The derivatives
$ \frac{\partial}{\partial
\beta_1}\psi_\infty$ and $ \frac{\partial}{\partial
\beta_2}\psi_\infty$
have (jump) discontinuities across the curve, except at
$(\beta_1^c, \beta_2^c)$ where, however, all the second derivatives
$ \frac{\partial^2}{\partial
\beta_1^2}\psi_\infty$, $ \frac{\partial^2}{\partial
\beta_1 \partial\beta_2}\psi_\infty$ and $ \frac{\partial
^2}{\partial
\beta_2^2}\psi_\infty$ diverge.
\end{theorem}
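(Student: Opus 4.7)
The plan is to reduce the full two-parameter problem to a one-dimensional optimization over the density of a constant graphon, and then to extract all the claimed analytic behavior by direct calculus on the resulting profile. Starting from the Chatterjee--Varadhan large deviation principle for dense graphs, $\psi_\infty$ admits the variational representation
\begin{equation}
\psi_\infty(\beta_1,\beta_2) = \sup_{f} \Bigl[ \beta_1 t(H_1,f) + \beta_2 t(H_2,f) - I(f) \Bigr],
\end{equation}
where $f$ ranges over graphons $[0,1]^2 \to [0,1]$ and $I(f) = \tfrac12 \iint [f\log f + (1-f)\log(1-f)]$. My first step would be to invoke (or reprove) the Chatterjee--Diaconis reduction: because $\beta_2 > 0$, the supremum is attained on constant graphons $f \equiv u$, so the problem reduces to
\begin{equation}
\psi_\infty(\beta_1,\beta_2) = \sup_{u \in [0,1]} \ell(u;\beta_1,\beta_2), \qquad \ell(u) = \beta_1 u + \beta_2 u^p - \tfrac12 u\log u - \tfrac12 (1-u)\log(1-u).
\end{equation}
Everything in the theorem is then a statement about how the maximizer $u^{*}(\beta_1,\beta_2)$ of this smooth one-parameter family depends on $(\beta_1,\beta_2)$.

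Next I would analyze the critical point equation $\ell'(u)=0$, which reads $\beta_1 + p\,\beta_2\, u^{p-1} = \tfrac12 \log\tfrac{u}{1-u}$. Since $\ell'(0^+)=+\infty$ and $\ell'(1^-)=-\infty$, the sup is attained in the interior. The second derivative is $\ell''(u) = p(p-1)\beta_2 u^{p-2} - \tfrac{1}{2u(1-u)}$, and because the function $2u^{p-1}(1-u)$ has a single interior maximum on $(0,1)$, $\ell''$ changes sign at most twice; consequently $\ell'$ has either one or three roots in $(0,1)$, giving either a unique global maximizer or two competing local maxima separated by a local minimum. The transition curve $\beta_2 = q(\beta_1)$ is the locus on which two local maxima tie in height, $\ell(u_1)=\ell(u_2)$ with $u_1\neq u_2$; its endpoint $(\beta_1^c,\beta_2^c)$ is precisely the point at which the three interior critical points coalesce into a single degenerate one with $\ell'=\ell''=\ell'''=0$. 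Solving $\ell''=0$ and $\ell'''=0$ simultaneously gives $u^c=(p-1)/p$, and substituting back into $\ell''=0$ and $\ell'=0$ produces
\[\beta_2^c=\frac{p^{p-1}}{2(p-1)^p}, \qquad \beta_1^c=\tfrac12\log(p-1)-\tfrac{p}{2(p-1)},\]
as in the statement.

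The remaining claims are envelope-theorem consequences. Off the transition curve, $\ell''(u^{*})<0$ at the unique global maximizer, so the implicit function theorem yields real-analytic dependence of $u^{*}$ on $(\beta_1,\beta_2)$, and hence joint analyticity of $\psi_\infty=\ell(u^{*};\beta_1,\beta_2)$ by composition. Across the curve the envelope theorem gives $\partial_{\beta_1}\psi_\infty = u^{*}$ and $\partial_{\beta_2}\psi_\infty = (u^{*})^p$; both jump because $u^{*}$ switches from the lower to the higher branch of local maxima. At $(\beta_1^c,\beta_2^c)$ these two branches have merged, so the first derivatives are continuous there; but differentiating $\ell'(u^{*};\beta_1,\beta_2)=0$ implicitly in $\beta_1$ gives $\partial_{\beta_1}u^{*} = -1/\ell''(u^{*})$ and similarly in $\beta_2$, so both diverge as $(\beta_1,\beta_2)\to(\beta_1^c,\beta_2^c)$ because $\ell''(u^c)=0$. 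Chaining through the envelope formulas $\partial_{\beta_1}^2\psi_\infty=\partial_{\beta_1}u^{*}$, $\partial_{\beta_1}\partial_{\beta_2}\psi_\infty=\partial_{\beta_2}u^{*}$, and $\partial_{\beta_2}^2\psi_\infty=p(u^{*})^{p-1}\partial_{\beta_2}u^{*}$ then forces all three second derivatives of $\psi_\infty$ to diverge at the critical point.

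The main obstacle I anticipate is twofold. First, rigorously justifying the reduction to constant graphons for every $\beta_2>0$ and every admissible $H_2$ is the only substantive graphon-level input needed; it is the step where one genuinely uses that the problem is about dense random graphs rather than a self-contained calculus exercise. Second, confirming the global bifurcation picture—that $\ell'(u)=0$ has at most three roots, that the equal-height locus is a smooth curve, and that it terminates in exactly a single cusp at $(\beta_1^c,\beta_2^c)$—requires a careful monotonicity and convexity analysis of the profile $\ell$ as $(\beta_1,\beta_2)$ vary; once that structure is established, the statements about analyticity off the curve, the jump of first derivatives across it, and the divergence of second derivatives at the endpoint reduce to direct implicit differentiation.
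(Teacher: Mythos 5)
Your proposal follows essentially the same route as the paper: reduce to the Chatterjee--Diaconis one-dimensional variational formula $\psi_\infty=\sup_{u}\ell(u;\beta_1,\beta_2)$, carry out the same bifurcation analysis of $\ell'$ and $\ell''$ to locate the coexistence region and the degenerate critical point at $u^c=(p-1)/p$, apply the analytic implicit function theorem at nondegenerate maximizers to get analyticity off the curve, and obtain the divergence of the second derivatives from $\partial_{\beta_1}u^*=-1/\ell''(u^*)$ together with $\ell''(u^*)\to 0$ at $(\beta_1^c,\beta_2^c)$. The only real difference is cosmetic: where you apply the envelope theorem directly to the variational formula to get $\partial_{\beta_1}\psi_\infty=u^*$ and $\partial_{\beta_2}\psi_\infty=(u^*)^p$, the paper instead derives these by proving convergence in probability of $\tilde G_n$ to the constant graphon $u^*$ and passing a Yang--Lee commutation of limits through $\E_{\beta_1,\beta_2}\{t(H_i,G_n)\}$ --- your shortcut is legitimate and, for this particular step, more direct.
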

%
\begin{theorem}
\label{Two} If the graph $H_2$ is a
$p$-star, $p\ge2$, then the pointwise limit
$\psi_\infty(\beta_1,\beta_2)$ exists and is analytic at all
$(\beta_1,\beta_2)$ in the lower half-plane ($\beta_2\leq0$).
\end{theorem}
\begin{remark*}
A $p$-star has $p$ edges meeting at a vertex.
\end{remark*}
%
\begin{corollary}
\label{Corr} For any allowed $H_2$,
the parameter space $\{(\beta_1,\beta_2)\dvtx\break  \beta_2\geq0\}$
consists of a single phase with a
first order phase transition across the indicated curve and a
second order phase transition at the critical point $(\beta_1^c, \beta_2^c)$.
\end{corollary}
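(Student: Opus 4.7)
Because Theorems~\ref{One} and~\ref{Two} already perform the substantial analytic work, the corollary is essentially a transcription into the Ehrenfest classification of phase transitions from statistical mechanics. The plan has three parts, each tied to one clause of the statement: connectedness of the analytic region for the ``single phase'' claim, identification of the first order transition along the curve, and identification of the second order transition at the endpoint.

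For the \emph{single phase} claim, I would adopt the standard convention that distinct phases correspond to distinct connected components of the open set on which $\psi_\infty$ is real analytic. By Theorem~\ref{One} (resp.~Theorem~\ref{Two}), $\psi_\infty$ is analytic off the arc $\beta_2=q(\beta_1)$, which terminates at the interior point $(\beta_1^c,\beta_2^c)$. A terminating simple arc does not separate a simply connected planar region: any two points off the arc can be joined by a path that detours around the endpoint. The admissible region is $\{\beta_2>0\}$ in the setting of Theorem~\ref{One} and all of $\R^2$ in the setting of Theorem~\ref{Two}, so in either case the complement of the arc is connected and the parameter space is a single phase.

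For the first order transition along the curve away from $(\beta_1^c,\beta_2^c)$, I would recall that the Ehrenfest scheme declares a transition to be of order $k$ when the free energy has continuous derivatives of every order less than $k$ but some $k$th partial is discontinuous across the transition locus. Theorem~\ref{One} states precisely that both first partials $\partial_{\beta_1}\psi_\infty$ and $\partial_{\beta_2}\psi_\infty$ are discontinuous at every non-endpoint of the curve, so the transition there is first order by definition. For Theorem~\ref{Two} with $H_2$ a $p$-star, the same reasoning applies on the enlarged parameter range.

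At the critical endpoint I would identify the transition as second order by noting that Theorem~\ref{One} explicitly excludes $(\beta_1^c,\beta_2^c)$ from the locus of first-derivative discontinuity, while asserting that all three second partials diverge there; divergence of a second derivative at a point where first derivatives are continuous is the standard fingerprint of a second order (critical) point, as in the liquid--gas transition. The one item worth checking carefully is that the jump amplitude in the first derivatives actually vanishes as one approaches $(\beta_1^c,\beta_2^c)$ along the curve, which I expect to read off from the explicit construction of $q$ in the proof of Theorem~\ref{One} (a residual finite jump would be incompatible with locally integrable, diverging second partials). This continuity check is the only real subtlety; otherwise the corollary follows directly from the definitions once Theorems~\ref{One} and~\ref{Two} are in hand.
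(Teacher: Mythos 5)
Your proposal is correct and follows essentially the same route as the paper, which justifies the corollary in a single sentence by combining its definition of phase and $j$th-order transition with the content of Theorems \ref{One} and \ref{Two}. The one subtlety you flag --- that the jump in the first derivatives must vanish as one approaches the endpoint --- is already built into the statement of Theorem \ref{One} (the first partials are continuous at $(\beta_1^c,\beta_2^c)$) and is verified in its proof by an explicit $\epsilon$--$\delta$ argument, while your explicit connectedness argument for the complement of the terminating arc is a small point the paper leaves implicit.
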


To explain the language of phase transitions in Corollary \ref{Corr} we
first give a superficial introduction to the formalism of classical
statistical mechanics within $d$-dimensional lattice gas models; for
more details see, for instance,~\cite{M}.\eject

Assume each point in a $d$-dimensional cube
%
\begin{equation}
C=\bigl\{-n,-(n-1)\cdots0,\cdots(n-1),n\bigr\}^d\subset
\Z^d
\end{equation}
is randomly occupied (by one particle) or not occupied, and assume
there is a (many-body) potential energy of fixed value $a\ne0$
associated with
every occupied subset of $C$ congruent to a certain $H_2\subset C$.
The interaction is attractive if $a<0$ and repulsive if
$a>0$. We define the probability that the occupied sites in $C$
are precisely $c$ by
%
\begin{equation}
\PR_n^{\beta,\mu}(c)=\frac{e^{-\beta[\mu E_1(c) + a E_2(c)]}} {
\Z_n},
\end{equation}
where the parameter $\beta>0$ is called the inverse
temperature, the parameter $\mu\in\R$ is called the chemical potential,
the normalization constant $\Z_n(\beta,\mu)$ is called the partition function,
$H_1$ and $H_2$ are subsets of $C$ with $H_1$ a
singleton and the cardinality $|H_2|\ge2$, and $E_j(c)$ is the
number of copies of $H_j$ in $c$. (We are using ``free'' boundary
conditions.) One of the basic features of the formalism is that the
free energy density, $F_n(\beta,\mu)=\ln[\Z_n(\beta,\mu)]/n^d$,
contains all ways to interact with or influence the system, so that
``all'' physically significant quantities can be obtained by
differentiating it with respect to $\beta$ and $\mu$. For instance,
%
\begin{equation}
\frac{\partial}{\partial\mu}F_n(\beta,\mu)=-{\beta} \E_{\beta,\mu}
\biggl
\{\frac{E_1}{n^d} \biggr\},
\end{equation}
the (average) particle density.
To model materials in thermal equilibrium, calculations in this
formalism normally require that the system size be sufficiently
large, and in practice one often resorts to using $n\to\infty$.
With this as motivation we tentatively define a ``phase'' as a
set of states (i.e., probability distributions) corresponding to a
connected region of the $(\beta,\mu)$ parameter space, which is
maximal for the condition that $ \lim_{n\to
\infty}\frac{\partial^{j+k}}{\partial\beta^j\,\partial
\mu^k}F_n(\beta,\mu)$ are\vspace*{1pt} analytic in $\beta$ and $\mu$ for all $j,k$.
One associates a ``phase transition'' with singularities
which develop in some of these quantities as the system size
diverges. Such singularities are sometimes detected in simulations
through the variances
$\sigma^2_1(n), \sigma^2_2(n)$ of the
particle and energy densities, $E_1/n^d, E_2/n^d$, respectively.
It is easy to check, for instance, that
%
\begin{equation}
\frac{\partial^{2}}{\partial\mu^2}F_n(\beta,\mu) = \beta^2n^d
\sigma^2_1(n),
\end{equation}
and therefore\vspace*{-1pt} a finite limit for $
\frac{\partial^{2}}{\partial\mu^2}F_n(\beta,\mu)$ as $n\to\infty$
implies $\sigma^2_1(n)\to0$ at least as fast as $1/n^d$, while the
divergence of $ \frac{\partial^{2}}{\partial
\mu^2}F_n(\beta,\mu)$ more slowly than $n^d$ as $n\to\infty$
implies
$\sigma^2_1(n)\to0$ more slowly than $1/n^d$,
and a jump discontinuity in $ \frac{\partial}{\partial\mu}
F_n(\beta,\mu)$ as $n\to\infty$ implies\vspace*{2pt} $\sigma^2_1(n)$ does not go
to 0
as $n\to\infty$.
An important simplification was proven by Yang and Lee \cite{YL} who
showed that the limiting free energy density
$ F_\infty(\beta,\mu)=\lim_{n\to\infty} F_n(\beta,\mu)$
always exists and that certain limits commute:
%
\begin{equation}
\label{YL}\quad \lim_{n\to
\infty}\frac{\partial^{j+k}}{\partial\beta^j\,\partial
\mu^k}F_n(\beta,
\mu)= \frac{\partial^{j+k}}{\partial\beta^j\,\partial
\mu^k}\lim_{n\to
\infty}F_n(\beta,\mu)=
\frac{\partial^{j+k}}{\partial\beta^j\,\partial
\mu^k}F_\infty(\beta,\mu).
\end{equation}
This implies that phases and phase transitions can
be determined from the limiting free energy density, and so a
phase is commonly defined (see, e.g., \cite{FR}) as a
connected region of the
$(\beta,\mu)$ parameter
space maximal for the condition that $F_{\infty}(\beta,\mu)$ is
analytic.

Using the obvious analogues for random graphs, with $\beta_1$
playing the role of $-\beta\mu$ and $\beta_2$ the role of $-\beta
a$ (and therefore positive if and only if the model is
``attractive''), $\psi_n=\psi_n(\beta_1,\beta_2)$ plays the key role
of the free
energy density $F_n(\beta,\mu)$. We will show in Theorems \ref{A} and
\ref{B} below
that the limiting free energy density $\psi_{\infty}(\beta_1, \beta
_2)$ exists,
and the proof of Theorem 2 by Yang and Lee \cite{YL}, on the
commutation of limits, then goes through without any difficulty in this
setting, so we can again define phases and phase transitions through
the limiting free energy density, as follows.
%
\begin{definition}
A phase is a connected region of the parameter space
$\{(\beta_1,\beta_2)\}$, maximal for the condition that
the limiting free energy density, $\psi_{\infty}(\beta_1, \beta_2)$,
is analytic. There is a $j$th-order transition at
a boundary point of a phase if at least one
$j$th-order partial derivative of $\psi_{\infty}(\beta_1,
\beta_2)$ is discontinuous there, while all lower order
derivatives are continuous.
\end{definition}
Theorems \ref{One} and \ref{Two} thereby justify our
interpretation in Corollary \ref{Corr} that each of our models
consists of a
single phase with a first order phase transition across the
indicated curve, except at the end (or ``critical'') point $(\beta
^c_1,\beta^c_2)$, where
the transition is second order, superficially similar to the
transition between
liquid and gas in equilibrium materials.

\section{Proofs}
Chatterjee and
Diaconis have proven that the main object of interest,
$\psi_\infty(\beta_1,\beta_2)$, exists for all $\beta_1$ and
nonnegative $\beta_2$ and is the solution of a certain
optimization problem:
%
\begin{theorem}[(Part of Theorem 4.1 in \cite{CD})]
\label{CD} Fix one of our models and assume $H_2$ has $p\ge2$
edges. Then for all $(\beta_1,\beta_2)$ in the upper half-plane $(\beta_2>0)$,
the pointwise limit
$ \psi_\infty(\beta_1,\beta_2)=\lim_{n\to
\infty}\psi_n(\beta_1,\beta_2)$ exists and
%
\begin{equation}
\label{fe}\qquad \psi_\infty(\beta_1,\beta_2)=\sup
_{u\in[0,1]} \biggl(\beta_1u+\beta_2u^p-
\frac{1}{2}u\log u-\frac{1}{2}(1-u)\log(1-u) \biggr).
\end{equation}
\end{theorem}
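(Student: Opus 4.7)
My plan is to follow the large-deviation strategy in the space of graphons, obtaining \eqref{fe} in two stages: first realize $\psi_\infty$ as a supremum over the whole graphon space, then reduce that supremum to the one-dimensional optimization in \eqref{fe}.

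\emph{Step 1 (LDP and Varadhan).} Write
$$e^{n^2\psi_n(\beta_1,\beta_2)} \;=\; 2^{\binom{n}{2}}\,\E_{1/2}\!\left[e^{n^2[\beta_1 t(H_1,G_n) + \beta_2 t(H_2,G_n)]}\right],$$
where $\E_{1/2}$ is expectation under Erd\H os--R\'enyi $G(n,1/2)$; taking $\tfrac{1}{n^2}\log$ extracts $\tfrac12\log 2$ from the prefactor. The Chatterjee--Varadhan LDP for $G(n,1/2)$ in the quotient graphon space $(\tilde{\mathcal W},\delta_\square)$, at speed $n^2$ with rate function $I_{1/2}(\tilde h)=\tfrac12\int_0^1\!\int_0^1[h\log(2h)+(1-h)\log(2(1-h))]\,dx\,dy$, combined with continuity of $\tilde h\mapsto t(H_j,h)$ in $\delta_\square$ (counting lemma) and Varadhan's integral lemma, gives after the $\tfrac12\log 2$ constants cancel
$$\psi_\infty(\beta_1,\beta_2) \;=\; \sup_{\tilde h\in\tilde{\mathcal W}}\Big[\beta_1 t(H_1,h)+\beta_2 t(H_2,h) - \tfrac12\!\int_0^1\!\int_0^1[h\log h+(1-h)\log(1-h)]\,dx\,dy\Big].$$

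\emph{Step 2 (reduction to constants).} The ``$\ge$'' direction in \eqref{fe} is immediate by testing constants $h\equiv u$, yielding $t(H_1,h)=u$, $t(H_2,h)=u^p$, and entropy integrand $u\log u+(1-u)\log(1-u)$. The reverse inequality is the crux. Jensen's inequality handles the entropy (minimized among graphons with $\int\!\int h=u$ by the constant $h\equiv u$), but for many choices of $H_2$ the opposite-direction bound $t(H_2,h)\ge u^p$ holds (for instance $t(K_3,h)\ge u^3$ by a Sidorenko-type rearrangement), so no term-by-term comparison $\Phi(h)\le\phi(\int\!\int h)$ is available. I would therefore proceed by compactness: upper semi-continuity of the graphon functional with respect to $\delta_\square$ together with compactness of $\tilde{\mathcal W}$ produces a maximizer $\tilde h^\star$, which must satisfy the Euler--Lagrange equation $h^\star(x,y)=\sigma\bigl(2\beta_1+2\beta_2\,(D_h t(H_2,\cdot))[h^\star](x,y)\bigr)$ with $\sigma(t)=e^t/(1+e^t)$. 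For $\beta_2\ge 0$ a convexity/monotonicity analysis of $\sigma$ composed with the functional derivative of $t(H_2,\cdot)$ forces $h^\star$ to be constant in $(x,y)$, and the graphon sup collapses to $\sup_{u\in[0,1]}\phi(u)$.

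\emph{Main obstacle.} Step~2 is where essentially all the work lies: it is where the hypothesis $\beta_2\ge 0$ enters in a genuine way, and indeed the reduction fails in general for $\beta_2<0$ (as Theorem~\ref{Two} already signals by restricting $\beta_2<0$ to $p$-stars). Step~1 is routine bookkeeping once the Chatterjee--Varadhan LDP and Varadhan's lemma are in hand, whereas controlling the graphon maximizer requires honest analysis of the homomorphism-density functional attached to $H_2$.
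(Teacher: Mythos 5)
The first thing to note is that the paper does not prove this statement at all: it is quoted verbatim as ``Part of Theorem 4.1 in \cite{CD}'' and used as imported input, so there is no in-paper proof to match. Judged as a reconstruction of the Chatterjee--Diaconis argument, your Step~1 (Chatterjee--Varadhan LDP for $G(n,1/2)$ plus Varadhan's lemma, giving the variational formula over the graphon space) is exactly right and is the route taken in \cite{CD}.

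Step~2, however, contains a genuine gap, and you have in fact argued yourself past the correct elementary closing move. You assert that ``no term-by-term comparison $\Phi(h)\le\phi(\iint h)$ is available'' because $t(H_2,h)\ge(\iint h)^p$ for Sidorenko-type graphs. That is the wrong comparison to look for. The inequality that closes the argument is
\begin{equation*}
t(H_2,h)\;\le\;\iint_{[0,1]^2} h(x,y)^{p}\,dx\,dy,\qquad p=|E(H_2)|,
\end{equation*}
which is just H\"older's inequality with exponent $p$ applied to the product $\prod_{e\in E(H_2)}h(x_i,x_j)$ over $[0,1]^{|V(H_2)|}$ (each factor has $L^p$-norm $(\iint h^p)^{1/p}$). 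Since $\beta_2\ge 0$, this yields the pointwise domination
\begin{equation*}
\beta_1\,t(H_1,h)+\beta_2\,t(H_2,h)-I(h)\;\le\;\iint\Big[\beta_1 h+\beta_2 h^p-\tfrac12 h\log h-\tfrac12(1-h)\log(1-h)\Big]dx\,dy\;\le\;\sup_{u\in[0,1]}l(u),
\end{equation*}
with equality along constants, and the reduction is done in one line. Your proposed substitute --- extract a maximizer $\tilde h^\star$ by compactness and argue via the Euler--Lagrange equation that ``a convexity/monotonicity analysis of $\sigma$ composed with the functional derivative of $t(H_2,\cdot)$ forces $h^\star$ to be constant'' --- is precisely the hard claim, and you give no argument for it. For a general $H_2$ the Euler--Lagrange equation is a nonlinear fixed-point equation in graphon space; constancy of its maximizing solutions is not obvious, your sketch does not isolate where $\beta_2\ge0$ would enter such an analysis, and (as Theorem 6.4 of \cite{CD} for stars shows) the conclusion is delicate enough that it genuinely fails for some $\beta_2<0$. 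As written, the crux of the proof is asserted rather than proved; the H\"older bound above is the missing lemma.
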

The following detailed analysis of this maximization problem is therefore
fundamental to understanding the phase structure of our models, and we
now address it.
%
\begin{proposition}
\label{max} Fix an integer $p\geq2$. Consider the maximization
problem for
%
\begin{equation}
\label{l} l(u; \beta_1, \beta_2)=\beta_1u+
\beta_2u^p-\tfrac{1}{2}u\log u-\tfrac{1}{2}(1-u)
\log(1-u)
\end{equation}
on the interval $[0,1]$, where $-\infty<\beta_1<\infty$ and
$-\infty<\beta_2<\infty$ are parameters. Then there is a $V$-shaped
region in the $(\beta_1, \beta_2)$ plane with corner point
$(\beta_1^c, \beta_2^c)$ such that outside this region, $l(u)$ has
a unique local maximizer (hence global maximizer) $u^*$; whereas
inside this region, $l(u)$ always has exactly two local maximizers
$u_1^*$ and $u_2^*$. Moreover, for every $\beta_1$ inside this
$V$-shaped region ($\beta_1<\beta_1^c$), there is a unique
$\beta_2=q(\beta_1)$ such that the two local maximizers of $l(u;
\beta_1, q(\beta_1))$ are both global maximizers. Furthermore $q$
is a continuous and decreasing function of $\beta_1$.
\end{proposition}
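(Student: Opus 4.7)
My plan is to reduce the maximization of $l(u)$ on $[0,1]$ to a root-counting problem for $l'$. Direct differentiation gives
\[
l'(u) = \beta_1 + p\beta_2 u^{p-1} - \tfrac{1}{2}\log\tfrac{u}{1-u},
\qquad
l''(u) = p(p-1)\beta_2 u^{p-2} - \tfrac{1}{2u(1-u)},
\]
with $l'(0^+)=+\infty$ and $l'(1^-)=-\infty$, so every local maximum is an interior critical point. I rewrite the critical equation as $\beta_1=\varphi(u):=\tfrac{1}{2}\log\tfrac{u}{1-u}-p\beta_2 u^{p-1}$ (so $\varphi'=-l''$) and the inflection equation $l''=0$ as $\beta_2=G(u):=1/[2p(p-1)u^{p-1}(1-u)]$. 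A one-line calculus check shows that $G$ attains a unique minimum on $(0,1)$ at $u_0=(p-1)/p$, of value exactly $\beta_2^c=p^{p-1}/[2(p-1)^p]$. Hence for $\beta_2\le\beta_2^c$ one has $l''\le 0$ throughout, $l$ is strictly concave and has a unique local maximizer; for $\beta_2>\beta_2^c$ the level set $\{G=\beta_2\}$ is $\{u_1,u_2\}$ with $u_1<u_0<u_2$, $l''>0$ on $(u_1,u_2)$ and $l''<0$ outside, so $\varphi$ increases, decreases, then increases, with local max $\varphi(u_1)$ and local min $\varphi(u_2)$. Counting preimages of $\beta_1$ under $\varphi$ produces three interior critical points of $l$ (max--min--max) exactly when $\varphi(u_2)<\beta_1<\varphi(u_1)$, and a single maximizer otherwise. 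This cuts out the open V-region
\[
V=\bigl\{(\beta_1,\beta_2):\beta_2>\beta_2^c,\ \varphi(u_2(\beta_2))<\beta_1<\varphi(u_1(\beta_2))\bigr\},
\]
whose two boundary branches both tend to the corner $(\beta_1^c,\beta_2^c)$ as $\beta_2\downarrow\beta_2^c$; plugging $u_0$ and $\beta_2^c$ into $\varphi$ yields $\beta_1^c=\tfrac{1}{2}\log(p-1)-\tfrac{p}{2(p-1)}$, as advertised.

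For the equal-height curve, denote the two local maximizers inside $V$ by $u_1^*<u_2^*$; by the implicit function theorem both are smooth in $(\beta_1,\beta_2)$ on $V$. Set
\[
\Phi(\beta_1,\beta_2):=l(u_2^*;\beta_1,\beta_2)-l(u_1^*;\beta_1,\beta_2).
\]
The envelope theorem gives $\partial_{\beta_1}\Phi=u_2^*-u_1^*>0$ and $\partial_{\beta_2}\Phi=(u_2^*)^p-(u_1^*)^p>0$, so $\Phi$ is strictly increasing in each parameter on $V$. On the lower boundary branch of $V$ the maximizer $u_2^*$ merges with the interior local minimum at $u=u_2$, whereas $u_1^*$ remains a genuine maximizer with $l(u_1^*)>l(u_2)$; this forces $\Phi<0$ just inside $V$ near the lower branch. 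The symmetric argument at the upper branch gives $\Phi>0$ there. Strict monotonicity of $\Phi$ in $\beta_2$ at fixed $\beta_1$, together with the intermediate value theorem, then furnishes, for each $\beta_1<\beta_1^c$, a unique $\beta_2=q(\beta_1)$ with $\Phi(\beta_1,q(\beta_1))=0$; the implicit function theorem delivers $q\in C^1$ with
\[
q'(\beta_1)=-\frac{\partial_{\beta_1}\Phi}{\partial_{\beta_2}\Phi}<0,
\]
which is precisely the continuity and strict decrease claimed.

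The main obstacle I anticipate is exactly this boundary sign analysis of $\Phi$. At $\partial V$ one local maximum is merging with the interior local minimum, and the envelope theorem---which presupposes non-degenerate critical points---cannot be invoked at the merging point itself. The remedy is to analyze $l$ directly on the boundary, where $\varphi$ is tangent to the horizontal line $y=\beta_1$: since $l$ is then strictly monotone between the surviving genuine maximum and the horizontal inflection, a strict inequality $\Phi\neq 0$ holds on $\partial V\setminus\{(\beta_1^c,\beta_2^c)\}$, and continuity transports this strict sign to a one-sided neighborhood inside $V$, which is what the intermediate-value step requires.
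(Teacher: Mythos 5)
Your proposal is correct, and its first two thirds (the analysis of $l''$ via the function $G=m$, the resulting shape of $l'$, the case split on the signs of $l'(u_1)$ and $l'(u_2)$, and the identification of the corner point) is essentially identical to the paper's proof, just phrased in terms of counting preimages of $\beta_1$ under $\varphi=\beta_1-l'$ rather than counting sign changes of $l'$. Where you genuinely diverge is the existence, uniqueness, and monotonicity of $q$. The paper argues via the fundamental theorem of calculus that $l(u_2^*)-l(u_1^*)$ equals the signed area between the graph of $l'$ and the $u$-axis, asserts that raising $\beta_2$ grows the positive area and shrinks the negative one, and deduces monotonicity of $q$ by a verbal ``balancing'' argument; this is slightly delicate because the endpoints $u_1^*,u_2^*$ of the relevant integral also move with the parameters. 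Your envelope-theorem computation $\partial_{\beta_1}\Phi=u_2^*-u_1^*>0$, $\partial_{\beta_2}\Phi=(u_2^*)^p-(u_1^*)^p>0$ handles that moving-endpoint issue automatically, gives uniqueness of $q(\beta_1)$ from strict monotonicity of $\Phi$ in $\beta_2$, and yields the Clausius--Clapeyron formula $q'=-\partial_{\beta_1}\Phi/\partial_{\beta_2}\Phi<0$, which is sharper than the paper's conclusion (it gives $q\in C^1$ with strictly negative derivative, whereas the paper only obtains continuity and monotonicity and must invoke Lebesgue differentiation in a separate remark to get differentiability a.e.). The sign determination of $\Phi$ at the two boundary branches is the same in both arguments (it is the content of the paper's Figures 4 and 5), and you correctly flag that the degeneracy of the merging critical point there forces you to argue on the boundary directly rather than via the envelope theorem; that caveat is handled adequately. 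The one small thing you leave implicit that the paper makes explicit is that the vertical line over each $\beta_1<\beta_1^c$ actually meets the V-region (the paper gets this from the range of the auxiliary function $n$), but this follows readily from your own description of the two branches, so it is a completeness remark rather than a gap.
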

\begin{remark*}
By the Lebesgue Differentiation theorem, $q$ being monotone guarantees
that it is differentiable almost everywhere.
\end{remark*}
\begin{pf*}{Proof of Proposition \ref{max}}
The location of maximizers of $l(u)$ on the interval $[0,1]$ are
closely related to properties of its derivatives $l'(u)$ and $l''(u)$,
%
\begin{eqnarray}
\label{lone} l'(u)&=&\beta_1+p
\beta_2u^{p-1}-\frac{1}{2}\log\frac{u}{1-u},
\\
%
\label{ltwo} l''(u)&=&p(p-1)
\beta_2u^{p-2}-\frac{1}{2u(1-u)}.
\end{eqnarray}

We first analyze properties of $l''(u)$ on the interval $[0,1]$.
Consider instead the function
%
\begin{equation}
\label{beta2} m(u)=\frac{1}{2p(p-1)u^{p-1}(1-u)}.
\end{equation}
Simple optimization shows
%
\begin{equation}
0\leq(p-1)u^{p-1}(1-u)\leq\biggl(\frac{p-1}{p}
\biggr)^p,
\end{equation}
and the equality holds if and only if $u=\frac{p-1}{p}$. Thus
%
\begin{equation}
\frac{p^{p-1}}{2(p-1)^p} \leq m(u)<\infty
\end{equation}
with the lower bound achieved only at $u=\frac{p-1}{p}$, and
$m(u)$ decreases before this minimum and increases after it. This
implies that for $\beta_2 \leq\frac{p^{p-1}}{2(p-1)^p}$,
$l''(u)\leq0$ on the whole interval $[0,1]$; whereas for
$\beta_2>\frac{p^{p-1}}{2(p-1)^p}$, $l''(u)$ will take on both
positive and negative values, and we denote the transition points
by $u_1$ and $u_2$ ($u_1<\frac{p-1}{p}<u_2$).\eject

Based on properties of $l''(u)$, we next analyze properties of
$l'(u)$ on the interval $[0,1]$. For $\beta_2 \leq
\frac{p^{p-1}}{2(p-1)^p}$, $l'(u)$ is monotonically decreasing.
For $\beta_2>\frac{p^{p-1}}{2(p-1)^p}$, $l'(u)$ is decreasing from
$0$ to $u_1$, increasing from $u_1$ to $u_2$ and then decreasing
again from $u_2$ to $1$.

Based on properties of $l'(u)$ and $l''(u)$, we analyze properties
of $l(u)$ on the interval $[0,1]$. Independent of the choice of
parameters $\beta_1$ and $\beta_2$, $l(u)$ is a bounded continuous
function, $l'(0)=\infty$ and $l'(1)=-\infty$, so $l(u)$ cannot
be maximized at $0$ or $1$. For $\beta_2 \leq
\frac{p^{p-1}}{2(p-1)^p}$, $l'(u)$ crosses the $u$-axis only once,
going from positive to negative. Thus $l(u)$ has a unique local
maximizer (hence global maximizer) $u^*$. For $\beta_2>
\frac{p^{p-1}}{2(p-1)^p}$, the situation is more complicated and
deserves a careful analysis. If $l'(u_1)\geq0$ [resp.,
$l'(u_2)\leq0$], $l(u)$ has a unique local maximizer (hence
global maximizer) at a point $u^*>u_2$ (resp., $u^*<u_1$). If
$l'(u_1)<0<l'(u_2)$, then $l(u)$ has two local maximizers $u_1^*$
and $u_2^*$, with $u_1^*<u_1<\frac{p-1}{p}<u_2<u_2^*$.\vspace*{1pt}

Notice that $u_1$ and $u_2$ are solely determined by the choice of
parameter $\beta_2>\frac{p^{p-1}}{2(p-1)^p}$, and vice versa. By
(\ref{beta2}),
%
\begin{eqnarray}
l'(u_1)&=&\beta_1+\frac{1}{2(p-1)(1-u_1)}-
\frac{1}{2}\log\frac{u_1}{1-u_1},
\\
%
l'(u_2)&=&\beta_1+\frac{1}{2(p-1)(1-u_2)}-
\frac{1}{2}\log\frac{u_2}{1-u_2}.
\end{eqnarray}
Consider the function
%
\begin{equation}
n(u)=\frac{1}{2(p-1)(1-u)}-\frac{1}{2}\log\frac{u}{1-u}.
\end{equation}
It is not hard to see that $n(0)=\infty$, $n(1)=\infty$, $n(u)$ is
decreasing from $0$ to $\frac{p-1}{p}$, then increasing from
$\frac{p-1}{p}$ to $1$, and the global minimum value is
%
\begin{equation}
n \biggl(\frac{p-1}{p} \biggr)=\frac{p}{2(p-1)}-\frac{1}{2}
\log(p-1).
\end{equation}
This implies in particular that $l'(u_1)\geq0$ for $\beta_1\geq
\frac{1}{2}\log(p-1)-\frac{p}{2(p-1)}$. The only possible region
in the $(\beta_1, \beta_2)$ plane where $l'(u_1)<0<l'(u_2)$ is
thus bounded by $\beta_1<\frac{1}{2}\log(p-1)-\frac{p}{2(p-1)}$
and $\beta_2>\frac{p^{p-1}}{2(p-1)^p}$.

\begin{figure}

\includegraphics{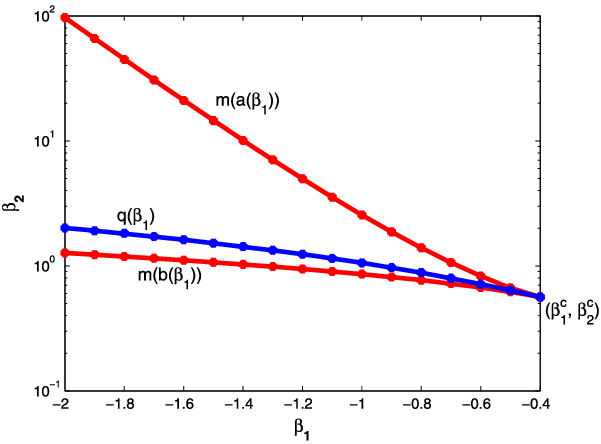}\vspace*{-3pt}

\caption{The $V$-shaped region (with phase transition curve inside)
in the $(\beta_1, \beta_2)$ plane. Graph drawn for $p=3$.}
\label{V-shape}\vspace*{-5pt}
\end{figure}

\begin{figure}[b]\vspace*{-3pt}

\includegraphics{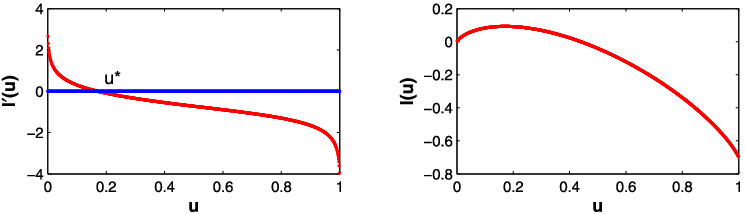}\vspace*{-3pt}

\caption{Outside the $V$-shaped region, $l(u)$ has
a unique local maximizer (hence global maximizer)~$u^*$. Graph
drawn for $\beta_1=-0.8$, $\beta_2=0.1$ and $p=3$.}
\label{below}
\end{figure}

We now analyze the behavior of $l'(u_1)$ and $l'(u_2)$ more
closely when $\beta_1$ and $\beta_2$ are chosen from this region.
Recall that by construction, $u_1<\frac{p-1}{p}<u_2$. By
monotonicity of $n(u)$ on the intervals $(0, \frac{p-1}{p})$ and
$(\frac{p-1}{p}, 1)$, there exist continuous functions
$a(\beta_1)$ and $b(\beta_1)$ of $\beta_1$, such that $l'(u_1)<0$
for $u_1>a(\beta_1)$ and $l'(u_2)>0$ for $u_2>b(\beta_1)$.
$a(\beta_1)$ is an increasing function of $\beta_1$, whereas
$b(\beta_1)$ is a decreasing function, and they satisfy
%
\begin{equation}
n\bigl(a(\beta_1)\bigr)=n\bigl(b(\beta_1)\bigr)=-
\beta_1.
\end{equation}
Also, as $\beta_1\rightarrow-\infty$, $a(\beta_1)\rightarrow0$
and $b(\beta_1)\rightarrow1$. By (\ref{beta2}), the restrictions
on $u_1$ and $u_2$ yield restrictions on $\beta_2$. We have
$l'(u_1)<0$ for $\beta_2<m(a(\beta_1))$ and $l'(u_2)>0$ for
$\beta_2>m(b(\beta_1))$. Notice that $m(a(\beta_1))$ and
$m(b(\beta_1))$ are both decreasing functions of $\beta_1$, and as
$\beta_1\rightarrow-\infty$, they both grow unbounded. By
construction, for every parameter value $(\beta_1, \beta_2)$,
$l'(u_2)>l'(u_1)$. Also, for fixed $\beta_1$, $m(a(\beta_1))$ is
the value of $\beta_2$ for which $l'(u_1)=0$, and $m(b(\beta_1))$
is the value for which $l'(u_2)=0$. Thus the curve $m(b(\beta_1))$
must lie below the curve $m(a(\beta_1))$. And together they
generate the bounding curves of the $V$-shaped region in the
$(\beta_1, \beta_2)$ plane where two local maximizers exist for
$l(u)$. It is not hard to see that the corner point is given by
$(\beta_1^c, \beta_2^c)= (\frac{1}{2}\log
(p-1)-\frac{p}{2(p-1)}, \frac{p^{p-1}}{2(p-1)^p} )$. (See
Figures \ref{V-shape}--\ref{qcurve}.)

Fixing an arbitrary $\beta_1<\beta_1^c$, we examine the effect of
varying $\beta_2$ on the graph of $l'(u)$. It is clear from
(\ref{lone}) that $l'(u)$ shifts upward as $\beta_2$ increases. As a
result, as $\beta_2$ gets large, the positive area bounded by the
curve $l'(u)$ increases, whereas the negative area decreases. By
the fundamental theorem of calculus, the difference between the
positive and negative areas is the difference between $l(u_2^*)$
and $l(u_1^*)$, which goes from negative [$l'(u_2)=0$, $u_1^*$ is
the global maximizer] to positive [$l'(u_1)=0$, $u_2^*$ is the
global maximizer] as $\beta_2$ goes from $m(b(\beta_1))$ to
$m(a(\beta_1))$. Thus there must be a unique $\beta_2\dvtx
m(b(\beta_1))<\beta_2<m(a(\beta_1))$ such that $u_1^*$ and $u_2^*$
are both global maximizers. We denote this $\beta_2$ by
$q(\beta_1)$; see Figures \ref{V-shape} and \ref{qcurve}.

\begin{figure}[t]

\includegraphics{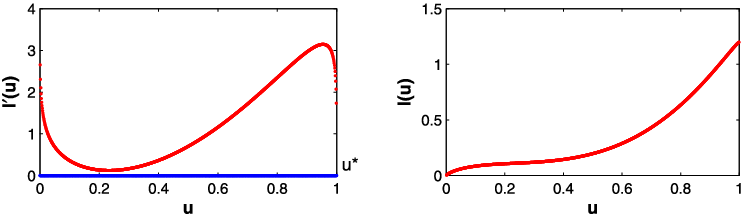}

\caption{Outside the $V$-shaped region, $l(u)$ has
a unique local maximizer (hence global maximizer) $u^*$. Graph
drawn for $\beta_1=-0.8$, $\beta_2=2$ and $p=3$.}
\label{above}
\end{figure}

\begin{figure}[b]

\includegraphics{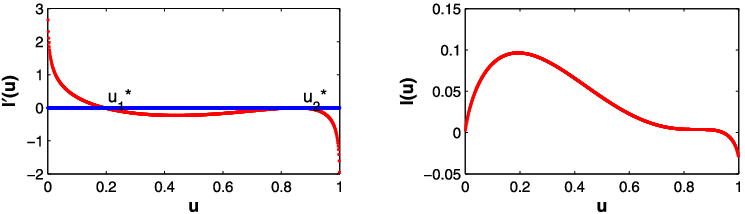}

\caption{Along the lower bounding curve of the $V$-shaped region, $l'(u)$ has
two zeros $u_1^*$ and $u_2^*$, but only $u_1^*$ is the global
maximizer for $l(u)$. Graph drawn for $\beta_1=-0.8$,
$\beta_2=0.769$ and $p=3$.}
\label{lower}
\end{figure}

\begin{figure}

\includegraphics{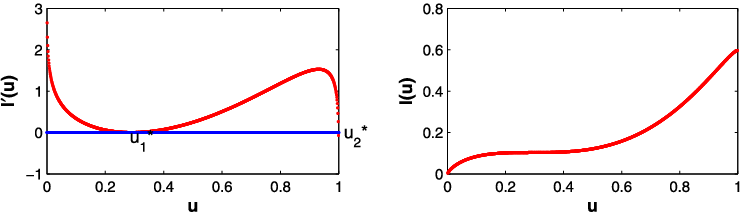}

\caption{Along the upper bounding curve of the $V$-shaped region, $l'(u)$ has
two zeros $u_1^*$ and $u_2^*$, but only $u_2^*$ is the global
maximizer for $l(u)$. Graph drawn for $\beta_1=-0.8$,
$\beta_2=1.396$ and $p=3$.}
\label{upper}
\end{figure}

\begin{figure}[b]

\includegraphics{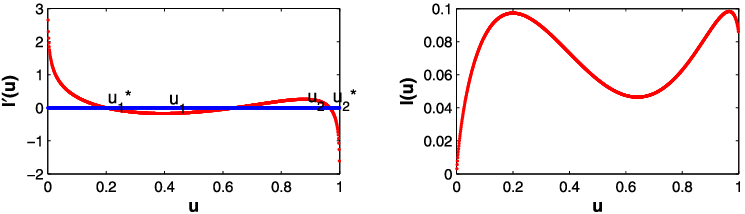}

\caption{Along the phase transition curve, $l(u)$ has
two local maximizers $u_1^*$ and $u_2^*$, and both are global
maximizers. Graph drawn for $\beta_1=-0.8$, $\beta_2=0.884$ and
$p=3$.}
\label{qcurve}
\end{figure}

By analyzing the graph of $l'(u)$, we see that the parameter
values of $(\beta_1, q(\beta_1))$ are exactly the ones for which
positive and negative areas bounded by $l'(u)$ equal each other.
An increase in $\beta_1$ will induce an upward shift of $l'(u)$,
which must be balanced by a decrease in $\beta_2=q(\beta_1)$.
Similarly, a decrease in $\beta_1$ will induce a downward shift of
$l'(u)$, which must be balanced by an increase in
$\beta_2=q(\beta_1)$. This justifies that $q$ is monotonically
decreasing in $\beta_1$. Furthermore, the continuity of $l'(u)$ as
a function of $\beta_1$ and $\beta_2$ implies the continuity of
$q$ as a function of $\beta_1$.
\end{pf*}
%
\begin{corollary}
The transition curve $\beta_2=q(\beta_1)$ displays a universal
asymptotic behavior as $\beta_1 \to-\infty$
%
\begin{equation}
\lim_{\beta_1 \to-\infty}\bigl|q(\beta_1)+\beta_1\bigr|=0.
\end{equation}
\end{corollary}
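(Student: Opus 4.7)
My plan rests on a key simplification: whenever $l'(u_i^*)=0$, substituting the resulting expression $\beta_1=\frac{1}{2}\log\frac{u_i^*}{1-u_i^*}-p\beta_2(u_i^*)^{p-1}$ back into $l(u_i^*)$ and cancelling yields
\[
l(u_i^*) = -\tfrac{1}{2}\log(1-u_i^*) - (p-1)\beta_2(u_i^*)^p, \qquad i=1,2.
\]
With $\beta_2=q(\beta_1)$, the equi-height condition $l(u_1^*)=l(u_2^*)$ that characterises the transition curve then rearranges to
\[
\tfrac{1}{2}\log\frac{1-u_1^*}{1-u_2^*} \;=\; (p-1)\,q(\beta_1)\bigl[(u_2^*)^p-(u_1^*)^p\bigr].
\]

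First I would establish the supporting asymptotics as $\beta_1\to-\infty$: $u_1^*\to 0$, $u_2^*\to 1$, $q(\beta_1)\to\infty$, and $q(\beta_1)(u_1^*)^{p-1}$ stays bounded. The interlacing $u_1^*<u_1<u_2<u_2^*$ from Proposition \ref{max}, together with $q(\beta_1)\geq m(b(\beta_1))\to\infty$ (since $b(\beta_1)\to 1$ and $m(u)\to\infty$ as $u\to 1$), forces the inflection points $u_1,u_2$ determined by $m(u_i)=q(\beta_1)$ to the endpoints, pulling $u_1^*,u_2^*$ along. The bound on $q(\beta_1)(u_1^*)^{p-1}$ follows from $u_1^*<u_1$ combined with the identity $q(\beta_1)u_1^{p-1}(1-u_1)=\frac{1}{2p(p-1)}$. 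Hence $q(\beta_1)(u_1^*)^p\to 0$ and $(u_2^*)^p-(u_1^*)^p\to 1$, so the displayed equi-height identity reduces to $-\frac{1}{2}\log(1-u_2^*)=(p-1)q(\beta_1)+o(1)$ after also noting $\log(1-u_1^*)\to 0$. This in turn yields the exponential decay $1-u_2^*=\exp\bigl[-2(p-1)q(\beta_1)(1+o(1))\bigr]$, giving in particular $q(\beta_1)(1-u_2^*)\to 0$.

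The conclusion then drops out of $l'(u_2^*)=0$, which reads
\[
\beta_1 + p\,q(\beta_1)(u_2^*)^{p-1} \;=\; \tfrac{1}{2}\log u_2^* \;-\; \tfrac{1}{2}\log(1-u_2^*).
\]
Expanding $(u_2^*)^{p-1}=1-(p-1)(1-u_2^*)+O((1-u_2^*)^2)$, using $\log u_2^*\to 0$, and invoking the two results of the previous paragraph, the equation collapses to $\beta_1+p\,q(\beta_1)+o(1)=(p-1)q(\beta_1)+o(1)$, i.e., $\beta_1+q(\beta_1)=o(1)$, which is the statement.

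The main technical point is ensuring the error terms are genuinely $o(1)$ and not merely bounded---above all, that $q(\beta_1)(1-u_2^*)\to 0$ even though $q(\beta_1)\to\infty$. This is where the exponential smallness of $1-u_2^*$, extracted directly from the reduced equi-height identity, is indispensable. Fortunately no bootstrap is required, because the simplified formula for $l(u_i^*)$ already incorporates the critical-point condition, and every auxiliary estimate reduces to the explicit bounds on $u_1,u_2,a(\beta_1),b(\beta_1)$ provided by Proposition \ref{max}.
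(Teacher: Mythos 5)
Your argument is correct, but it is quite different from---and substantially more quantitative than---the proof in the paper. The paper's proof is a two-line soft argument: it evaluates $l$ along the anti-diagonal $\beta_2=-\beta_1$, observes that $l(u;\beta_1,-\beta_1)=\beta_1(u-u^p)+I(u)\to-\infty$ pointwise for $u\in(0,1)$ while the endpoint values stay at $0$, and concludes that the two local maximizers tend to $0$ and $1$ with equal limiting heights, so the equal-height curve $q$ must be asymptotic to the anti-diagonal. (As stated that argument leaves implicit the sensitivity of $l(u_2^*)-l(u_1^*)$ to $\beta_2$, which is what converts ``both heights tend to $0$ along $\beta_2=-\beta_1$'' into $|q(\beta_1)+\beta_1|\to 0$.) You instead work directly on the curve: the stationarity identity $l(u_i^*)=-\frac12\log(1-u_i^*)-(p-1)\beta_2(u_i^*)^p$ is correct, the equal-height condition rearranges as you say, and the chain $q\to\infty$, $u_1^*\to 0$, $u_2^*\to 1$, $q(u_1^*)^{p-1}=O(1)$ all follow from Proposition \ref{max} exactly as you indicate. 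Your approach buys genuine extra information---the exponential smallness $1-u_2^*=\exp[-2(p-1)q(\beta_1)(1+o(1))]$ and hence $q(\beta_1)(1-u_2^*)\to 0$, which is precisely the estimate needed to close the final step---whereas the paper's argument gives only the limit statement. One presentational quibble: the intermediate claim $-\frac12\log(1-u_2^*)=(p-1)q(\beta_1)+o(1)$ is asserted one step too early; at that stage the equal-height identity only yields $(p-1)q(\beta_1)(u_2^*)^p+o(1)=(p-1)q(\beta_1)(1+o(1))$, which is already enough for the exponential decay and for $q(\beta_1)(1-u_2^*)\to 0$, after which the additive $o(1)$ form follows. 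This is a harmless reordering, not a gap.
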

\begin{pf}
Assume $\beta_1=\beta=-\beta_2+\delta$ where $|\delta|\ne0$, and
define $F(u,\beta)=\beta(u-u^p)$, $I(u)=-(1/2) u\log u -(1/2)
(1-u)\log(1-u)$ and $G(u)=\delta u^p+I(u)$. Then
$\ell(u;\beta_1,\beta_2)=\ell(u,\beta)=F(u,\beta)+G(u)$. We will
show, for sufficiently negative $\beta$, that if $\delta>0$ the
global maximum $u^\ast$ of $\ell(u,\beta)$ equals $u^\ast_2$,
while if $\delta<0$ $u^\ast$ equals $u^\ast_1$. This implies,
for these $\beta$, that $-\beta-|\delta| < q(\beta)<
-\beta+|\delta|$, which will prove the desired limit.

From the continuity of $G(u)$ there exists $\eta\in(0, 1/p)$,
such that $|G(u)-G(u_0)| < |\delta|/2$ for all $|u-u_0|<\eta$, for
both $u_0=0$ and $u_0=1$. Recall that $u^\ast_1<u_1<
\frac{p-1}{p}<u_2<u^\ast_2$. Since $u-u^p=u(1-u^{p-1})>0$ on
$(0,1)$, there exists $B<0$ such that for all $\beta<B$ and $u\in
[\eta, 1-\eta]$, $F(u,\beta)< -1 -|\delta|$ and therefore
$\ell(u,\beta)<0=\ell(0,\beta)$, so $u^\ast\in[0,\eta)\cup
(1-\eta,1]$. Using that $F(u, \beta) \le0$ for all $\beta<0$ and
all $u\in[0,1]$, if $\delta>0$ and $u< \eta<1/p\le(p-1)/p$, then
$\ell(u,\beta)< \delta/2 < \delta=\ell(1,\beta)$ so $u^\ast=
u^\ast_2$, while if $\delta<0$ and $u> 1-\eta>(p-1)/p$, then
$\ell(u,\beta)< -|\delta|/2< 0=\ell(0,\beta)$ and therefore
$u^\ast= u^\ast_1$.
\end{pf}

So far we have used results from \cite{CD} but have avoided specific
reference to the framework of graph limits, developed by Lov\'{a}sz et
al, which was used to prove those results. We now need to refer
directly to graph limits; for the notation and an introduction to this
material see, for instance, \cite{LS} or \cite{CD}.
%
\begin{theorem}
\label{gen} Let $G_n$ be a random graph on $n$ vertices in one of
our models. For parameter values of $(\beta_1, \beta_2)$ in the
upper half-plane $\beta_2>-\frac{2}{p(p-1)}$, the behavior of
$G_n$ in the large $n$ limit is as follows:
%
\begin{equation}
\min_{u\in U}\delta_{\square}(\tilde G_n,\tilde
u)\to0\qquad\mbox{in probability as } n\to\infty,
\end{equation}
where $U$ is the set of maximizers of (\ref{l}).
\end{theorem}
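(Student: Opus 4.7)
The strategy is to combine the cut-metric large deviation principle of Chatterjee and Varadhan for the Erd\H{o}s-R\'enyi model with the exponential-tilting argument of Chatterjee and Diaconis, and then reduce to Proposition~\ref{max} via a variational analysis on graphon space.

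First, I would invoke the Chatterjee-Varadhan LDP: for $G_n \sim G(n,1/2)$, the empirical graphon $\tilde G_n$ satisfies a large deviation principle in the cut metric $\delta_\square$ with good rate function $I(f)+\log 2$. Since $t(H_1,\cdot)$ and $t(H_2,\cdot)$ are bounded and cut-metric continuous (by the counting lemma for homomorphism densities), Varadhan's lemma applied to the tilted density, combined with the standard concentration-on-maximizers argument used in Theorem~3.2 of \cite{CD}, yields $\delta_\square(\tilde G_n,\mathcal F^*)\to 0$ in probability, where
\[
\mathcal F^* := \bigl\{f : F(f)=\sup F\bigr\}, \qquad F(f):=\beta_1 t(H_1,f)+\beta_2 t(H_2,f)-I(f).
\]

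Second, to conclude that $\min_{u\in U}\delta_\square(\tilde G_n, \tilde u)\to 0$, it suffices to show that every element of $\mathcal F^*$ is (cut-equivalent to) a constant graphon. Since $F(\tilde u)=l(u)$, Proposition~\ref{max} then identifies $\mathcal F^* = \{\tilde u : u\in U\}$, and the claim follows. To rule out non-constant maximizers, I would analyze the linear path $f_s = (1-s)\tilde u + s f$ with $u=\int\int f$. The first variation vanishes at $s=0$ (the linear-in-$f$ terms cancel because $\int\int(f-u)=0$), and the second variation expands as
\[
\tfrac{d^2}{ds^2}F(f_s)\Big|_{s=0}=\beta_2 u^{p-2}\Bigl(\sum_{v\in V(H_2)}\deg(v)(\deg(v)-1)\Bigr)\|\eta_*\|_2^2 \;-\; \tfrac{1}{2u(1-u)}\|\eta\|_2^2,
\]
with $\eta=f-\tilde u$ and $\eta_*(x)=\int\eta(x,y)\,dy$. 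Using the combinatorial bound $\sum_v \deg(v)(\deg(v)-1)\le p(p-1)$ (saturated by the star) together with $\|\eta_*\|_2\le\|\eta\|_2$ (Cauchy-Schwarz) and the pointwise lower bound $1/(2f_s(1-f_s))\ge 2$, an integrated version of this calculation along the full path $s\in[0,1]$ should give the global comparison $F(f)\le F(\tilde u)$ precisely in the regime $\beta_2>-2/(p(p-1))$, with strict inequality unless $f\equiv\tilde u$.

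The main obstacle is this reduction to constants on the full stated range. For $\beta_2\ge 0$ it is already implicit in~\cite{CD} (where the relevant cross terms have the favorable sign), but for negative $\beta_2$ one must control the contributions to $\delta^2 t(H_2,f_s)$ coming from pairs of disjoint edges, which vanish at constants but not at general graphons, and then absorb them into the Hessian of the entropy; this is what is expected to produce the sharp threshold $2/(p(p-1))$. Once $\mathcal F^* = \{\tilde u : u \in U\}$ is established, the concentration from the first step together with the fact (from Proposition~\ref{max}) that $U$ contains at most two points and that $\delta_\square(\tilde u,\tilde v)>0$ for $u\neq v$ gives the desired statement $\min_{u\in U}\delta_\square(\tilde G_n,\tilde u)\to 0$ in probability.
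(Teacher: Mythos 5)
Your overall architecture is the right one and matches the logic underlying the paper's proof: (i) show that $\tilde G_n$ concentrates in $\delta_{\square}$ on the set of maximizers of the graphon functional $F(f)=\beta_1 t(H_1,f)+\beta_2 t(H_2,f)-I(f)$, (ii) show that every such maximizer is a constant graphon, so the variational problem collapses to the scalar problem (\ref{l}), and (iii) invoke Proposition \ref{max} to describe the maximizer set. The paper does not reprove any of this machinery: it simply observes that steps (i) and (ii) are exactly the content of Theorems 4.2 and 6.1 of \cite{CD}, whose combined hypotheses cover $\beta_2>-\tfrac{2}{p(p-1)}$, and then applies Proposition \ref{max}. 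Your step (i), via the Chatterjee--Varadhan LDP and Varadhan's lemma, is essentially Theorem 3.2 of \cite{CD} and is sound.

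The genuine gap is in step (ii) on the range $-\tfrac{2}{p(p-1)}<\beta_2<0$, which you yourself flag as ``the main obstacle.'' The second-variation computation you display cannot produce the threshold: at $s=0$, with $\eta=f-\tilde u$ and $\int\!\!\int\eta=0$, the disjoint-edge contributions to $\tfrac{d^2}{ds^2}t(H_2,f_s)$ are $(\int\!\!\int\eta)^2=0$ and the adjacent-edge contributions are $u^{p-2}\sum_v\deg(v)(\deg(v)-1)\,\|\eta_*\|_2^2\ge 0$, so for \emph{every} $\beta_2<0$ the second variation of $F$ at a constant is at most $-2\|\eta\|_2^2<0$, with no condition on $\beta_2$ at all. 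The constraint $\beta_2>-\tfrac{2}{p(p-1)}$ must therefore emerge from controlling $F$ along the entire path $s\in[0,1]$, where $f_s$ is no longer constant, the disjoint-edge terms no longer vanish, and $t(H_2,f_s)$ is a degree-$p$ polynomial in $s$ whose higher derivatives are not governed by the Hessian at $s=0$. That is precisely the part you leave as ``expected to'' work, and it is the entire content of the step. For comparison, \cite{CD} prove their Theorem 6.1 by a different route: any maximizer satisfies an Euler--Lagrange equation of the form $h(x,y)=e^{2\Delta(x,y)}/(1+e^{2\Delta(x,y)})$, with $\Delta$ built from partial derivatives of the homomorphism densities, and the smallness condition on $|\beta_2|p(p-1)$ makes the associated map a contraction in the sup norm, forcing $h$ to be constant. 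You should either import that theorem, as the paper does, or carry out an argument of that type; as written, your reduction to constant graphons for negative $\beta_2$ is not established.
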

\begin{pf}
The assumptions of Theorems 4.2 or 6.1 in \cite{CD} are satisfied
for parameter values $\beta_2>-\frac{2}{p(p-1)}$. By Proposition
\ref{max}, along the curve $(\beta_1, q(\beta_1))$, the
maximization problem (\ref{l}) is solved at two values $u_1^*$ and
$u_2^*$; whereas off this curve, it is solved at a unique
value $u^*$. Thus in the large $n$ limit, along the curve
$(\beta_1, q(\beta_1))$, $G_n$ behaves like an Erd\H{o}s--R\'{e}nyi
graph $G(n, u)$ ($u$ picked by some distribution from $u_1^*$ and $u_2^*$);
whereas off this curve, $G_n$ is indistinguishable from the
Erd\H{o}s--R\'{e}nyi graph $G(n, u^*)$.
\end{pf}
%
\begin{corollary}
\label{q} Fix any $\beta_2>\beta_2^c$. Let $H$ be an edge,
so $t(H,G_n)$ is the edge
density of $G_n$. Then there exists a continuous and decreasing function
$q^{-1}(\beta_2)$ such that
%
\begin{equation}
\lim_{n\rightarrow\infty}\PR_n^{\beta_1,\beta_2}
\bigl(t(H,G_n)>u_2\bigr)=1 \qquad\mbox{if }
\beta_1>q^{-1}(\beta_2)
\end{equation}
and
%
\begin{equation}
\lim_{n\rightarrow\infty}\PR_n^{\beta_1,\beta_2}
\bigl(t(H,G_n)<u_1\bigr)=1 \qquad\mbox{if }
\beta_1<q^{-1}(\beta_2).
\end{equation}
Here $u_1$ and $u_2$ are defined as in the proof of Proposition
\ref{max}: $m(u_1)=m(u_2)=\beta_2$.
\end{corollary}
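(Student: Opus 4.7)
The plan is to combine Proposition \ref{max} (the structure of maximizers of $l$) with Theorem \ref{gen} (cut-metric convergence of $G_n$). The function $q^{-1}$ will be the set-theoretic inverse of $q$, and the key point is that on each side of $\beta_1=q^{-1}(\beta_2)$ the function $l(\cdot;\beta_1,\beta_2)$ has a unique global maximizer lying on the correct side of the interval $[u_1,u_2]$.

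First I would construct $q^{-1}$. By Proposition \ref{max}, $q:(-\infty,\beta_1^c)\to\R$ is continuous and strictly decreasing (strictness is implicit in the proof's observation that any shift in $\beta_1$ must be compensated by a definite shift in $\beta_2$ to keep the positive and negative areas under $l'$ balanced). The corollary $|q(\beta_1)+\beta_1|\to 0$ forces $q(\beta_1)\to\infty$ as $\beta_1\to -\infty$, while continuity together with the location of the corner $(\beta_1^c,\beta_2^c)$ gives $q(\beta_1)\to\beta_2^c$ as $\beta_1\to\beta_1^c$. Thus $q$ is a continuous bijection onto $(\beta_2^c,\infty)$, and $q^{-1}$ inherits continuity and strict monotonicity on this range.

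Next I would fix $\beta_2>\beta_2^c$ and identify the global maximizer of $l$ off the transition curve. If $\beta_1>q^{-1}(\beta_2)$, then either $(\beta_1,\beta_2)$ lies inside the V-shaped region strictly above the transition curve, in which case Proposition \ref{max} identifies $u_2^*>u_2$ as the unique global maximizer, or $(\beta_1,\beta_2)$ lies outside the V-shape (including the case $\beta_1\ge\beta_1^c$), in which case $l'(u_1)\ge 0$. In the latter case the shape of $l'$ established in Proposition \ref{max}---namely $l'(0)=+\infty$, decreasing on $(0,u_1)$, increasing on $(u_1,u_2)$, decreasing on $(u_2,1)$, with $l'(1)=-\infty$---forces $l'\ge 0$ throughout $[0,u_2]$, so the unique zero of $l'$ must lie in $(u_2,1)$ and the unique global maximizer $u^*$ still satisfies $u^*>u_2$. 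The case $\beta_1<q^{-1}(\beta_2)$ is entirely symmetric and produces a unique global maximizer $u^*<u_1$.

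Finally I would invoke Theorem \ref{gen}. Since $\beta_2>\beta_2^c>-\tfrac{2}{p(p-1)}$, the theorem applies, and for $\beta_1\ne q^{-1}(\beta_2)$ the set $U$ of maximizers is the singleton $\{u^*\}$ described above. The conclusion $\delta_\square(\tilde G_n,\tilde u^*)\to 0$ in probability, combined with the continuity of the edge density $t(H,\cdot)$ as a functional of graphons under the cut metric, yields $t(H,G_n)\to u^*$ in probability, and the strict inequalities $u^*>u_2$ and $u^*<u_1$ give the two claimed limits. The delicate step---and the one I would take most care with---is the bookkeeping at the boundary of the V-shape: when the horizontal slice $\{\beta_2=\mathrm{const}\}$ exits the V-region through its upper or lower bounding curve, or when $\beta_1$ passes $\beta_1^c$, one must verify with a \emph{strict} inequality that the surviving global maximizer sits on the correct side of $[u_1,u_2]$, and the sign analysis of $l'(u_1)$ and $l'(u_2)$ sketched above is exactly what handles this.
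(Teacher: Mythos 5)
Your proposal is correct and follows essentially the same route as the paper: construct $q^{-1}$ from the continuity and monotonicity of $q$, use the case analysis of Proposition \ref{max} (position relative to the V-region and the transition curve, via the signs of $l'(u_1)$ and $l'(u_2)$) to show the unique global maximizer satisfies $u^*>u_2$ or $u^*<u_1$, and then apply Theorem \ref{gen}. Your extra care at the boundary of the V-region and the explicit appeal to continuity of the edge density under the cut metric are details the paper's proof leaves implicit, but the argument is the same.
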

\begin{remark*}
As $\beta_2\rightarrow\infty$, $u_1\rightarrow0$ and
$u_2\rightarrow1$ and the jump is noticeable even for relatively
small values of $\beta_2$.
\end{remark*}
\begin{pf*}{Proof of Corollary \ref{q}}
As $q(\beta_1)$ is a continuous and decreasing function of
$\beta_1$, the inverse function $q^{-1}(\beta_2)$ exists and is
also continuous and decreasing. We examine the effect of varying
$\beta_1$ on the graph of $l'(u)$ [and hence on the global
maximizers of $l(u)$]. First note that varying $\beta_1$ does not
change the shape of $l'(u)$. Inside the $V$-shaped region, there are
three cases. Recall that $u_1^*<u_1<\frac{p-1}{p}<u_2<u_2^*$. For
$\beta_1=q^{-1}(\beta_2)$, positive and negative areas bounded by
$l'(u)$ equal each other and thus $u_1^*$ and $u_2^*$ are both global
maximizers. For $\beta_1<q^{-1}(\beta_2)$, the graph of $l'(u)$
shifts downward, negative area exceeds positive area and thus $u_1^*$
is the global maximizer. For $\beta_1>q^{-1}(\beta_2)$, the graph
of $l'(u)$ shifts upward, positive area exceeds negative area and
thus $u_2^*$ is the global maximizer. Outside the $V$-shaped region,
there are two cases. Below the lower bounding curve, $l'(u)$ has a
unique local maximizer $u^*<u_1$. Above the upper bounding curve,
$l'(u)$ has a unique local maximizer $u^*>u_2$. Our conclusion
then follows from Theorem~\ref{gen}.
\end{pf*}
%
\begin{theorem}
\label{p} Assume that in one of our models $H_2$ is a $p$-star
($p\geq2$). For all parameter values $(\beta_1, \beta_2)$, the
behavior of $G_n$ in the large $n$ limit is as follows:
%
\begin{equation}
\min_{u\in U}\delta_{\square}(\tilde G_n,\tilde
u)\to0\qquad\mbox{in probability as } n\to\infty,
\end{equation}
where $U$ is the set of maximizers of (\ref{l}).
\end{theorem}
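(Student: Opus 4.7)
The plan is to extend the cut-metric concentration of Theorem \ref{gen} from the half-plane $\beta_2 > -\frac{2}{p(p-1)}$ to every $(\beta_1,\beta_2) \in \R^2$ in the star case $H_2 = K_{1,p}$. The mechanism behind the extension is that a $p$-star homomorphism density depends on the graphon only through its degree function, which enables a Jensen-style collapse of the graphon variational problem to the one-dimensional problem (\ref{l}) in the previously inaccessible regime.

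The first step is to upgrade the variational formula (\ref{fe}) of Theorem \ref{CD} to all $\beta_2 \in \R$ in the star case. For a symmetric graphon $W\colon[0,1]^2\to[0,1]$, write $d_W(x) = \int_0^1 W(x,y)\,dy$ for its degree function, $u(W) = \int\!\int W(x,y)\,dx\,dy$ for its edge density, $h(t) = \frac{1}{2}[t\log t + (1-t)\log(1-t)]$, and $I(W) = \int\!\int h(W(x,y))\,dx\,dy$. Then $t(H_1, W) = u(W)$ and $t(K_{1,p}, W) = \int_0^1 d_W(x)^p\,dx$, and applying Jensen's inequality twice, using convexity of $x \mapsto x^p$ on $[0,1]$ (valid since $p \geq 2$) and of $h$, yields
\begin{equation*}
t(K_{1,p}, W) \geq u(W)^p \qquad \text{and} \qquad I(W) \geq h(u(W)),
\end{equation*}
with equality in both if and only if $W$ is constant a.e. For $\beta_2 \leq 0$ the first inequality flips sign upon multiplication by $\beta_2$, so combining the two gives
\begin{equation*}
\beta_1 t(H_1, W) + \beta_2 t(K_{1,p}, W) - I(W) \leq l(u(W); \beta_1, \beta_2),
\end{equation*}
with equality only at constant graphons. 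Taking suprema, the variational formula (\ref{fe}) extends to all $\beta_2 \in \R$ and the set of maximizing graphons is exactly $\{\tilde u^* : u^*$ maximizes $l(\cdot;\beta_1,\beta_2)\}$; this is already the content of $U$ in the statement, via Proposition \ref{max}.

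The second step is to pass from the variational identity to cut-metric concentration. Since both $t(H_1, \cdot)$ and $t(K_{1,p}, \cdot)$ are bounded (taking values in $[0,1]$) and continuous in the cut metric on the quotient graphon space, Varadhan's lemma applied to the Chatterjee--Varadhan LDP for $G(n, 1/2)$ reproduces (\ref{fe}) and, applied to closed sets, delivers
\begin{equation*}
\limsup_{n\to\infty}\frac{1}{n^2}\log \PR_n^{\beta_1,\beta_2}\bigl(\tilde G_n\in F\bigr) < 0
\end{equation*}
for every closed $F$ at positive cut-distance from $U$. This gives $\min_{u\in U}\delta_{\square}(\tilde G_n,\tilde u)\to 0$ in probability. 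The main obstacle, and the reason the argument succeeds for stars but not for generic $H_2$ in Chatterjee--Diaconis's setting, is the verification that Varadhan's lemma applies without any lower bound on $\beta_2$; the boundedness and cut-continuity of $t(K_{1,p}, \cdot)$, together with the Jensen collapse of Step 1 identifying the variational maximizers as constant graphons, are precisely what make this verification routine.
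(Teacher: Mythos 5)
Your argument is correct in substance, but it takes a more self-contained route than the paper: the paper's proof of this theorem is essentially a citation, invoking Theorems 4.2 and 6.4 of \cite{CD} (4.2 for the upper half-plane, 6.4 for the star case in the lower half-plane) and then saying the reasoning of Theorem \ref{gen} carries over. What you have written out --- the identity $t(K_{1,p},W)=\int_0^1 d_W(x)^p\,dx$, the double application of Jensen's inequality collapsing the graphon variational problem to the scalar problem (\ref{l}) when $\beta_2\le 0$, and the concentration of $\tilde G_n$ on the maximizer set via the Chatterjee--Varadhan LDP and Varadhan's lemma --- is precisely the content of Theorems 6.4 and 3.2 of \cite{CD}, so you have in effect reproved the cited ingredients rather than quoted them. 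This buys transparency (the reader sees exactly why stars are special), at the cost of redoing work the paper deliberately outsources; either way the theorem follows.

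Two points deserve tightening. First, equality in $t(K_{1,p},W)\ge u(W)^p$ holds if and only if the degree function $d_W$ is constant a.e., not if and only if $W$ is constant a.e.; your conclusion that the graphon maximizers are constant survives because equality in $I(W)\ge h(u(W))$ is also forced, and strict convexity of $h$ then pins $W$ down to a constant. Second, your closing diagnosis slightly misplaces the obstruction for general $H_2$: Varadhan's lemma needs only boundedness and cut-continuity of $t(H_2,\cdot)$ and applies for every $\beta_2$, so the LDP concentration on the set of graphon maximizers is not the issue. What fails without the star structure at sufficiently negative $\beta_2$ is your Step 1 --- the maximizing graphons need not be constant (this is the ``solid/fluid'' regime of Theorem 7.1 of \cite{CD} mentioned in the paper's Summary) --- so the concentration set cannot be identified with the scalar set $U$. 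Since your own text already locates the decisive input in the Jensen collapse, this is a matter of emphasis rather than a gap.
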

\begin{pf}
This follows from related results in \cite{CD}. We separate the
parameter plane $\{(\beta_1, \beta_2)\}$ into upper and lower
half-planes. The upper half-plane ($\beta_2\geq0$) satisfies the
assumptions of Theorem 4.2, and the lower half-plane ($\beta_2\leq
0$) satisfies the assumptions of Theorem 6.4. By similar reasoning
as in Theorem \ref{gen}, the rest of the proof follows.
\end{pf}

The following is a straightforward application of the
standard analytic implicit function theorem, which can be found, for
instance, in the text of Krantz and Parks~\cite{KP}, so we omit the proof.
%
\begin{proposition}
\label{ana1} Off the end point $(\beta_1^c, \beta_2^c)$, the
local maximizer $u^*$ for $l(u; \beta_1, \beta_2)$ ($u_1^*$ and
$u_2^*$ if inside the $V$-shaped region) is an analytic function of
the parameters $\beta_1$ and $\beta_2$.
\end{proposition}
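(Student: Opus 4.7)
The plan is to apply the analytic implicit function theorem (Theorem \ref{KP}) to the stationarity equation $l'(u;\beta_1,\beta_2)=0$, treating $(\beta_1,\beta_2)$ as parameters and $u$ as the unknown. Writing
\begin{equation*}
F(\beta_1,\beta_2,u)=\beta_1+p\beta_2 u^{p-1}-\tfrac{1}{2}\log\frac{u}{1-u},
\end{equation*}
the function $F$ is jointly real-analytic on $\R^2\times(0,1)$: the only non-polynomial ingredient is the logarithm, which extends to a holomorphic function on any sufficiently small complex disk about a given $u^*\in(0,1)$. Consequently $F$ admits an absolutely convergent power series expansion on some closed polydisc centred at any such point $(\beta_1^0,\beta_2^0,u^*)$. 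At a local maximizer $u^*$ we have $F(\beta_1^0,\beta_2^0,u^*)=l'(u^*)=0$, so the only remaining hypothesis of Theorem \ref{KP} is the non-degeneracy $\partial F/\partial u = l''(u^*)\neq 0$.

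The core of the proof is therefore verifying $l''(u^*)\neq 0$ at every local maximizer, provided $(\beta_1^0,\beta_2^0)\neq(\beta_1^c,\beta_2^c)$. The case analysis already carried out in Proposition \ref{max} handles this cleanly: for $\beta_2<\beta_2^c$ we have $l''(u)<0$ on all of $(0,1)$; for $\beta_2>\beta_2^c$ every local maximizer satisfies $u^*<u_1$ or $u^*>u_2$, hence lies outside the interval on which $l''$ is positive, giving $l''(u^*)<0$; and for $\beta_2=\beta_2^c$ the second derivative $l''$ vanishes only at $u=(p-1)/p$, but a direct substitution using $\beta_2^c=p^{p-1}/(2(p-1)^p)$ yields
\begin{equation*}
l'\bigl((p-1)/p;\beta_1,\beta_2^c\bigr)=\beta_1+\frac{p}{2(p-1)}-\frac{1}{2}\log(p-1)=\beta_1-\beta_1^c,
\end{equation*}
so $u=(p-1)/p$ is a critical point of $l$ only when $\beta_1=\beta_1^c$; for $\beta_1\neq\beta_1^c$ the unique maximizer must therefore avoid $(p-1)/p$ and satisfies $l''(u^*)<0$.

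With $l''(u^*)\neq 0$ in hand, Theorem \ref{KP}, applied after translating so that $(\beta_1^0,\beta_2^0,u^*)$ goes to the origin, supplies an absolutely convergent power series solution of the form $u=u^*+f(\beta_1-\beta_1^0,\beta_2-\beta_2^0)$ to $F=0$ on a neighbourhood of $(\beta_1^0,\beta_2^0)$. Strict monotonicity of $u\mapsto l'(u;\beta_1,\beta_2)$ across $u^*$, inherited from $l''(u^*)<0$ by continuity, forces this analytic branch to coincide with the chosen local maximizer for nearby parameter values. The only real obstacle is recognising why the endpoint $(\beta_1^c,\beta_2^c)$ must be excluded: there $l'$ and $l''$ vanish simultaneously at $u=(p-1)/p$, so the hypothesis of Theorem \ref{KP} degenerates in an essential way, and this very degeneracy is what will produce the divergence of the second derivatives of $\psi_\infty$ at the critical point asserted in Theorem \ref{One}.
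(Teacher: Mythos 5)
Your proposal is correct and follows essentially the same route as the paper: apply the Krantz--Parks analytic implicit function theorem to $l'(u;\beta_1,\beta_2)=0$ and verify the non-degeneracy $l''(u^*)\neq 0$ via the case analysis of Proposition \ref{max} (noting that for $\beta_2>\beta_2^c$ every local maximizer lies outside $[u_1,u_2]$, and that at $\beta_2=\beta_2^c$ the point $(p-1)/p$ is a critical point only when $\beta_1=\beta_1^c$). Your explicit computation $l'\bigl((p-1)/p;\beta_1,\beta_2^c\bigr)=\beta_1-\beta_1^c$ is a welcome addition the paper only asserts, and your sign $l''(u)<0$ for $\beta_2<\beta_2^c$ is the correct one.
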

%
\begin{proposition}
\label{ana2} Off the phase transition curve, $l(u^*)=\max
l(u; \beta_1, \beta_2)$ [$l(u_1^*)$ or $l(u_2^*)$ if inside the
$V$-shaped region] is an analytic function of the parameters
$\beta_1$ and $\beta_2$.
\end{proposition}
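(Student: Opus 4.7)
The plan is to compose the joint analyticity of $l$ itself with the analyticity of the local maximizers supplied by Proposition \ref{ana1}, and then resolve the branching of the global maximum inside the V-shape by using the hypothesis ``off the phase transition curve.'' First, I would note that $l(u;\beta_1,\beta_2)=\beta_1 u+\beta_2 u^p-\tfrac{1}{2}u\log u-\tfrac{1}{2}(1-u)\log(1-u)$ is jointly real analytic in $(u,\beta_1,\beta_2)\in(0,1)\times\R^2$, since $u\log u$ and $(1-u)\log(1-u)$ are real analytic on $(0,1)$ and the rest is polynomial. Consequently, for any real analytic $u(\beta_1,\beta_2)$ taking values in $(0,1)$, the composition $l(u(\beta_1,\beta_2);\beta_1,\beta_2)$ is real analytic in $(\beta_1,\beta_2)$.

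Next, I would fix $(\beta_1^0,\beta_2^0)$ off the phase transition curve; in particular $(\beta_1^0,\beta_2^0)\neq(\beta_1^c,\beta_2^c)$, so Proposition \ref{ana1} applies. If $(\beta_1^0,\beta_2^0)$ lies outside the open V-shaped region of Proposition \ref{max}, then $l(\cdot;\beta_1^0,\beta_2^0)$ has a unique local (hence global) maximizer $u^*$, which Proposition \ref{ana1} extends to an analytic function $u^*(\beta_1,\beta_2)$ on a neighborhood, and composing with $l$ gives analyticity of the maximum. If instead $(\beta_1^0,\beta_2^0)$ lies inside the open V-shaped region, there are two local maximizers $u_1^*$ and $u_2^*$, each extended analytically on a neighborhood by Proposition \ref{ana1}, and the global maximum equals $\max\{l(u_1^*(\beta_1,\beta_2);\beta_1,\beta_2),\,l(u_2^*(\beta_1,\beta_2);\beta_1,\beta_2)\}$. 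By Proposition \ref{max}, the two analytic branches coincide precisely on the curve $\beta_2=q(\beta_1)$, and off this curve one strictly dominates the other; since $(\beta_1^0,\beta_2^0)$ is off the curve by hypothesis, continuity of the two branches lets the strict inequality persist on a (possibly smaller) neighborhood, on which the global maximum agrees with a single analytic branch.

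The main obstacle is the case of competing local maxima inside the V-shape: $\max\{l(u_1^*),l(u_2^*)\}$ is a priori only continuous, with a potential kink along $\{l(u_1^*)=l(u_2^*)\}$. That locus is by construction the phase transition curve, which the hypothesis explicitly excludes, so the kink is avoided. The bounding curves of the V-shape present no additional difficulty, since on them the global maximizer is the unique non-degenerate critical point with $l''\neq 0$, and Proposition \ref{ana1} continues it analytically across the bounding curve via the implicit function theorem.
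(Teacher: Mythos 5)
Your proposal is correct and follows essentially the same route as the paper: compose the joint analyticity of $l$ with the analyticity of the local maximizers from Proposition \ref{ana1}, use the hypothesis ``off the transition curve'' to ensure the global maximum locally agrees with a single analytic branch inside the V-shaped region, and note that the relevant branch matches $l(u^*)$ across the bounding curves. Your treatment of the branch-switching locus and of the bounding curves is, if anything, slightly more explicit than the paper's.
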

\begin{pf}
It is clear that $l(u; \beta_1, \beta_2)$ is analytic for $u\in
(0,1)$, $\beta_1\in(-\infty, \infty)$, and $\beta_2\in(-\infty,
\infty)$. Outside the $V$-shaped region, $l(u)$ has a unique local
maximizer $u^*$ in $(0,1)$, which is analytic in $\beta_1$ and
$\beta_2$ by Proposition \ref{ana1}. Inside the $V$-shaped region,
$l(u)$ has two local maximizers $u_1^*$ and $u_2^*$, both have
values in $(0,1)$ and are analytic in $\beta_1$ and $\beta_2$ by
Proposition \ref{ana1}. Below the phase transition curve, $\max
l(u)$ is given by $l(u_1^*)$, which coincides with $l(u^*)$ along
the lower bounding curve. Above the phase transition curve, $\max
l(u)$ is given by $l(u_2^*)$, which coincides with $l(u^*)$ along
the upper bounding curve. Our claim follows by realizing that
compositions of analytic functions are analytic as long as the
domains and ranges match up.
\end{pf}
%
\begin{theorem}
\label{A} Let $G_n$ be a random graph on $n$ vertices in one of
our models. The limiting free energy density
$\psi_\infty=\lim_{n\rightarrow\infty}\psi_n$ is an analytic
function of the parameters $\beta_1$ and $\beta_2$ off the phase
transition curve in the upper half-plane
$\beta_2>-\frac{2}{p(p-1)}$.
\end{theorem}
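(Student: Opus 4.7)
The plan is to reduce Theorem \ref{A} to the variational formula
\begin{equation*}
\psi_\infty(\beta_1, \beta_2) = \sup_{u \in [0,1]} l(u; \beta_1, \beta_2),
\end{equation*}
valid throughout the upper half-plane $\beta_2 > -\frac{2}{p(p-1)}$. Once this identity is in hand, the conclusion is immediate from Proposition \ref{ana2}, which already delivers analyticity of $\sup_u l(u; \beta_1, \beta_2)$ in $(\beta_1, \beta_2)$ off the phase transition curve $\beta_2 = q(\beta_1)$.

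For $\beta_2 \geq 0$ the identity is Theorem \ref{CD} verbatim, so the substantive work is confined to the strip $-\frac{2}{p(p-1)} < \beta_2 < 0$. In that regime I would follow the route already used in the proof of Theorem \ref{gen}. The graph-limit large deviation principle of \cite{CD} expresses $\psi_\infty$ (when the limit exists) as the supremum of the functional $\beta_1 t(H_1, \tilde W) + \beta_2 t(H_2, \tilde W) - I(\tilde W)$ over the space of graphons $\tilde W$, and Theorem 6.1 of \cite{CD}, whose hypotheses hold precisely when $\beta_2 > -\frac{2}{p(p-1)}$, both guarantees existence of the limit and forces the graphon supremum to be attained on constant graphons $W \equiv u$. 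Evaluating the functional on $W \equiv u$ returns $\beta_1 u + \beta_2 u^p - \tfrac{1}{2}u\log u - \tfrac{1}{2}(1-u)\log(1-u) = l(u; \beta_1, \beta_2)$, and the variational formula extends across the strip.

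With the formula available on the whole half-plane, I would apply Proposition \ref{ana2} pointwise at each $(\beta_1, \beta_2)$ not on $\beta_2 = q(\beta_1)$: outside the V-shaped region $\psi_\infty = l(u^*)$ and inside it $\psi_\infty$ equals whichever of $l(u_1^*), l(u_2^*)$ is the strict maximum, both of which are analytic compositions. The main obstacle will be the negative-$\beta_2$ strip, where one must carefully check that the hypotheses of Theorem 6.1 of \cite{CD} are indeed met once the linear edge term $\beta_1 t(H_1, \cdot)$ is adjoined to the $H_2$-functional, and that the reduction to constant graphons persists for the full two-parameter functional rather than only the pure $H_2$ part. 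Since these are precisely the conditions already invoked in the excerpt's proof of Theorem \ref{gen} under the same bound $\beta_2 > -\frac{2}{p(p-1)}$, I expect no new difficulty beyond careful bookkeeping.
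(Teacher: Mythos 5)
Your proposal is correct and follows essentially the same route as the paper: the paper's own proof simply notes that the hypotheses of Theorems 4.2 and 6.1 of \cite{CD} hold for $\beta_2>-\frac{2}{p(p-1)}$ (thereby extending the variational formula $\psi_\infty=\sup_u l(u;\beta_1,\beta_2)$ to the whole half-plane) and then invokes Proposition \ref{ana2}. Your version spells out the reduction to constant graphons in the strip $-\frac{2}{p(p-1)}<\beta_2<0$ more explicitly, but the logical skeleton is identical.
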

\begin{pf}
The assumptions of Theorems 4.1 and 6.1 in \cite{CD} are satisfied
for parameter values $\beta_2>-\frac{2}{p(p-1)}$. Our claim then
follows from Proposition~\ref{ana2}.
\end{pf}
%
\begin{theorem}
\label{B} Assume that in one of our models $H_2$ is a $p$-star
(\mbox{$p\geq2$}). The limiting free energy density
$\psi_\infty=\lim_{n\rightarrow\infty}\psi_n$ is an analytic
function of the parameters $\beta_1$ and $\beta_2$ off the phase
transition curve.
\end{theorem}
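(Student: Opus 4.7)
The plan is to repeat the argument of Theorem \ref{A} verbatim, with one modification: we extend the variational formula \eqref{fe} for $\psi_\infty$ from the upper half-plane $\beta_2\geq -\frac{2}{p(p-1)}$ to the entire $(\beta_1,\beta_2)$-plane by exploiting the $p$-star assumption. Once this extension is in place, the analyticity claim follows directly from Proposition \ref{ana2}.

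First I would establish that when $H_2$ is a $p$-star,
\[
\psi_\infty(\beta_1,\beta_2)=\sup_{u\in[0,1]} l(u;\beta_1,\beta_2)
\]
for every $(\beta_1,\beta_2)\in\R^2$. On the upper half-plane this is Theorem \ref{CD}. On the complementary lower half-plane, I would invoke Theorem 6.4 of \cite{CD}, proved there specifically for $p$-star targets, which delivers precisely the same variational representation. This is exactly the ingredient already used in the proof of Theorem \ref{p}, so nothing new must be verified. Concatenating the two half-planes gives the variational formula globally.

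With the variational formula in hand on all of $\R^2$, the analyticity conclusion is immediate from Proposition \ref{ana2}. Indeed, Proposition \ref{max} analyzed $l(u;\beta_1,\beta_2)$ for arbitrary real parameters, and the V-shaped region it produced has corner $(\beta_1^c,\beta_2^c)$ with $\beta_2^c=p^{p-1}/(2(p-1)^p)>0$, hence sits entirely in the upper half-plane. Consequently, throughout the lower half-plane, $l(u;\beta_1,\beta_2)$ has a unique local maximizer $u^*\in(0,1)$ with $l''(u^*)<0$, so Proposition \ref{ana1} yields analytic dependence of $u^*$ on $(\beta_1,\beta_2)$ and therefore of the optimum value $l(u^*;\beta_1,\beta_2)=\psi_\infty(\beta_1,\beta_2)$. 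In the upper half-plane the conclusion is supplied verbatim by Theorem \ref{A}.

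The one conceptual hurdle is the extension of the variational formula to $\beta_2<0$; for a general simple graph $H_2$ this would require new input (and indeed is why Theorem \ref{One} restricts to $\beta_2>0$), but the $p$-star assumption unlocks the specific convexity/monotonicity arguments of \cite[Theorem 6.4]{CD}. Thus the whole proof amounts to stitching Theorems 4.2 and 6.4 of \cite{CD} together and invoking the already-established Propositions \ref{ana1} and \ref{ana2}; no genuinely new estimates are needed.
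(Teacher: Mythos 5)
Your proposal is correct and follows essentially the same route as the paper: the paper's proof of Theorem \ref{B} is exactly the observation that Theorems 4.2 and 6.4 of \cite{CD} extend the variational formula to all of $\R^2$ in the $p$-star case, after which the claim follows from Proposition \ref{ana2}. Your additional remark that the V-shaped region lies entirely in the upper half-plane (since $\beta_2^c>0$), so the lower half-plane contributes only unique maximizers, is a correct elaboration of what the paper leaves implicit.
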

\begin{pf}
The assumptions of Theorems 4.1 or 6.4 in \cite{CD} are
satisfied. Our claim again follows from Proposition \ref{ana2}.
\end{pf}
%
\begin{lemma}[(Lov\'{a}sz--Szegedy \cite{LS})]
\label{LS} Let $U, W\dvtx  [0,1]^2\rightarrow[0,1]$ be two symmetric
integrable functions. Then for every finite simple graph $F$,
%
\begin{equation}
\bigl|t(F, U)-t(F, W)\bigr|\leq\bigl|E(F)\bigr|\cdot\delta_{\square}(\tilde{U}, \tilde{W}).
\end{equation}
\end{lemma}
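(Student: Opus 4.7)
The plan is to combine a telescoping expansion of $t(F,U)-t(F,W)$ over the edges of $F$ with the invariance of $t(F,\cdot)$ under measure-preserving relabelings of $[0,1]$; the telescoping step produces a bound with the plain cut norm $\|U-W\|_\square$ in place, and the invariance then upgrades this to a bound in terms of $\delta_\square(\tilde U,\tilde W)$.

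First I would enumerate the edges $e_1,\dots,e_m$ of $F$, with $m=|E(F)|$ and $e_l=(i_l,j_l)$, and for $0\le k\le m$ define the hybrid quantity
\begin{equation*}
I_k=\int_{[0,1]^{|V(F)|}}\Bigl(\prod_{l\le k}W(x_{i_l},x_{j_l})\Bigr)\Bigl(\prod_{l>k}U(x_{i_l},x_{j_l})\Bigr)\,dx.
\end{equation*}
Then $I_0=t(F,U)$ and $I_m=t(F,W)$, so a telescoping sum gives $t(F,U)-t(F,W)=\sum_{k=1}^{m}(I_{k-1}-I_k)$, and by the triangle inequality it suffices to prove $|I_{k-1}-I_k|\le \|U-W\|_\square$ for each $k$.

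For fixed $k$ only the factor at edge $e_k=(i_k,j_k)$ changes between $I_{k-1}$ and $I_k$. Applying Fubini to integrate out the $|V(F)|-2$ variables other than $x_{i_k}$ and $x_{j_k}$ yields measurable functions $f,g:[0,1]\to[0,1]$, each a product of $[0,1]$-valued factors coming from the edges of $F$ incident to $i_k$ or $j_k$ other than $e_k$, such that
\begin{equation*}
I_{k-1}-I_k=\int_{[0,1]^2}\bigl[U(x,y)-W(x,y)\bigr]\,f(x)g(y)\,dx\,dy.
\end{equation*}
Using the layer-cake identities $f(x)=\int_0^1\mathbf{1}_{\{f>s\}}(x)\,ds$ and $g(y)=\int_0^1\mathbf{1}_{\{g>t\}}(y)\,dt$ and Fubini once more reduces the right-hand side to an average over $(s,t)\in[0,1]^2$ of integrals of the form $\int_{S\times T}(U-W)$, each bounded in absolute value by $\|U-W\|_\square$. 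This gives the desired per-edge bound, and summing over $k$ yields $|t(F,U)-t(F,W)|\le |E(F)|\cdot\|U-W\|_\square$.

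Finally, $t(F,\cdot)$ is invariant under measure-preserving relabelings: $t(F,W)=t(F,W^\phi)$ where $W^\phi(x,y)=W(\phi(x),\phi(y))$ for any measure-preserving bijection $\phi$ of $[0,1]$. Applying the previous bound to the pair $(U,W^\phi)$ and taking the infimum over $\phi$ produces $|t(F,U)-t(F,W)|\le |E(F)|\cdot\delta_\square(\tilde U,\tilde W)$, as claimed. The main technical point is the per-edge bound: a cut-norm estimate is naturally formulated with $\{0,1\}$-valued test functions (indicators of measurable $S,T$), whereas here the residual functions $f,g$ are merely $[0,1]$-valued, and the layer-cake reduction is the cleanest route between the two; the remaining steps are bookkeeping.
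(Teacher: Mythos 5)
The paper offers no proof of this lemma at all --- it is quoted verbatim from Lov\'asz--Szegedy \cite{LS} --- so the only meaningful comparison is with the standard argument there, which is indeed the telescoping-plus-cut-norm strategy you chose. Your overall plan (telescope over edges, bound each increment by $\|U-W\|_\square$, then pass to $\delta_\square$ via invariance of $t(F,\cdot)$ under measure-preserving relabelings) is the right one, and the layer-cake reduction from $[0,1]$-valued test functions to indicators, as well as the final infimum over $\phi$, are both correct.

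There is, however, a genuine flaw in your per-edge step as written. You claim that integrating out the $|V(F)|-2$ variables other than $x_{i_k}$ and $x_{j_k}$ leaves an integrand of the product form $[U(x,y)-W(x,y)]\,f(x)g(y)$. This is false in general: if $i_k$ and $j_k$ have a common neighbor $v$ in $F$ (take $F$ a triangle and $e_k$ any of its edges), then integrating out $x_v$ produces a factor such as $\int_0^1 W(x,x_v)\,U(y,x_v)\,dx_v$, which is a genuine function of the pair $(x,y)$ and does not factor as $f(x)g(y)$; your layer-cake argument then has nothing to bite on. Your own description of $f$ and $g$ as ``products of factors coming from the edges incident to $i_k$ or $j_k$'' betrays the problem, since those factors still depend on the supposedly integrated-out variables, and the edges touching neither endpoint have disappeared from the bookkeeping entirely. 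The standard repair is to reverse the order of operations: \emph{fix} the variables $(x_v)_{v\neq i_k,j_k}$ rather than integrating them out, so that for each fixed choice the integrand over $(x_{i_k},x_{j_k})$ really is $[U-W](x,y)\,f(x)\,g(y)\,c$ with $f,g$ the products over edges from $i_k$ (resp.\ $j_k$) to the now-frozen vertices, and $c\in[0,1]$ the product over the remaining edges; bound this inner double integral by $\|U-W\|_\square$ exactly as you do, and only then integrate the resulting uniform bound over the remaining variables, which live in a probability space. With that correction the rest of your argument goes through and reproduces the cited proof.
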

\begin{pf*}{Proof of Theorems \ref{One} and \ref{Two}}
The stated analyticity is proven in Theorems \ref{A} and \ref{B}, so we
only need to examine the situation along the phase transition curve. We
know from Theorems \ref{gen} and \ref{p} that $\tilde G_n$ converges in
probability to $u^*$, off the curve. By Lemma \ref{LS}, $t(H_1, G_n)$
then converges in probability to $t(H_1, u^*)$. As $t(H_1, G_n)$ is
uniformly bounded in $n$, this implies that
%
\begin{equation}
\E_{\beta_1,\beta_2}\bigl\{\bigl|t(H_1,G_n)-t
\bigl(H_1,u^*\bigr)\bigr|\bigr\}\to0 \qquad\mbox{as }n\to\infty.
\end{equation}
Therefore
%
\begin{eqnarray}
\E_{\beta_1,\beta_2}\bigl\{t(H_1,G_n)\bigr\}&\to&
\E_{\beta_1,\beta_2}\bigl\{t\bigl(H_1,u^*\bigr)\bigr\}
\nonumber\\[-8pt]\\[-8pt]
&=&u^*(\beta_1, \beta_2)=\frac{\partial}{\partial
\beta_1}
\psi_{\infty}(\beta_1, \beta_2) \qquad\mbox{as }n\to
\infty.
\nonumber
\end{eqnarray}
Similarly,
%
\begin{eqnarray}\qquad
\E_{\beta_1,\beta_2}\bigl\{t(H_2,G_n)\bigr\}&\to&
\E_{\beta_1,\beta_2}\bigl\{t\bigl(H_2,u^*\bigr)\bigr\}
\nonumber\\[-8pt]\\[-8pt]
&=& \bigl(u^*(\beta_1, \beta_2) \bigr)^p=
\frac{\partial}{\partial
\beta_2}\psi_{\infty}(\beta_1, \beta_2)
\qquad\mbox{as }n\to\infty.
\nonumber
\end{eqnarray}
By Corollary \ref{q}, these\vspace*{1pt} two first derivatives $ \frac
{\partial}{\partial
\beta_1}\psi_{\infty}$ and $ \frac{\partial}{\partial
\beta_2}\psi_{\infty}$ are discontinuous across the curve (except at
the end
point). Let us now take a closer look at the behavior of $\psi_{\infty
}$ at the critical point.
Recall that $l'(u; \beta_1^c, \beta_2^c)$ is monotonically decreasing
on $[0,1]$,
and the unique zero is achieved at $\frac{p-1}{p}$.
Take any $0<\varepsilon<\frac{1}{p}$.
Set $\delta=\min\{l'(\frac{p-1}{p}-\varepsilon),
-l'(\frac{p-1}{p}+\varepsilon)\}$. Consider $(\beta_1, \beta_2)$ so close
to $(\beta_1^c, \beta_2^c)$ such that
$|\beta_1-\beta_1^c|+p|\beta_2-\beta_2^c|<\delta$. For every $u$ in
$[0,1]$, we then have $\llvert l'(u; \beta_1, \beta_2)-l'(u; \beta_1^c,
\beta_2^c)\rrvert<
\delta$. It follows that the zeros $u^*(\beta_1, \beta_2)$ ($u_1^*$
and $u_2^*$ if inside the $V$-shaped
region) must satisfy $\llvert u^*-\frac{p-1}{p}\rrvert<\varepsilon$, which
easily implies the continuity of
$ \frac{\partial}{\partial\beta_1}\psi_\infty$ and
$ \frac{\partial}{\partial\beta_2}\psi_\infty$ at $(\beta
_1^c, \beta_2^c)$.
To see that the transition at the critical point is
second-order, we check the second derivatives of $\psi_\infty$ in its
neighborhood. Off the phase transition
curve,
%
\begin{eqnarray}
\lim_{n\to\infty}\frac{\partial^2}{\partial
\beta_1^2}\psi_n&=&
\frac{\partial^2}{\partial
\beta_1^2}\psi_{\infty}=\frac{\partial}{\partial\beta_1}u^* =-\frac
{1}{l''(u^*)},
%
\\
\lim_{n\to\infty}\frac{\partial^2}{\partial\beta_1 \,\partial
\beta_2}\psi_n&=&
\frac{\partial^2}{\partial\beta_1\,
\partial\beta_2}\psi_{\infty}=\frac{\partial}{\partial
\beta_1}\bigl(u^*
\bigr)^p =-\frac{p(u^*)^{p-1}}{l''(u^*)},
\\
%
\lim_{n\to\infty}\frac{\partial^2}{\partial
\beta_2^2}\psi_n&=&
\frac{\partial^2}{\partial
\beta_2^2}\psi_{\infty}=\frac{\partial}{\partial\beta_2}\bigl(u^*
\bigr)^p =-\frac{(p(u^*)^{p-1})^2}{l''(u^*)}.
\end{eqnarray}
But as was explained in the proof of Proposition \ref{max}, $l''(u^*)$
converges to zero as $(\beta_1, \beta_2)$ approaches $(\beta_1^c,
\beta_2^c)$; the desired singularity is thus justified.
\end{pf*}

\section{Summary}
Much of the literature on phase transitions in exponential random
graph models uses techniques such as mean-field approximations,
which are mathematically uncontrolled. As such they have been
useful in discovering interesting behavior, but they can be
misleading in detail. For instance, although phase transitions
have been discovered in this way for the two-star ($H_2$ a
two-star) \cite{PN1} and edge-triangle ($H_2$ a triangle) models
\cite{PN2}, the approximation leads to an error in the qualitative
nature of the transition, attributing phase coexistence to the
full $V$-shaped region of Figure \ref{V-shape} rather than just the
curve $ \beta_2=q(\beta_1)$; in other words it does
not distinguish the local maxima in the region from the global
maxima.

Chatterjee and Diaconis \cite{CD} gave the first rigorous proof of
singular behavior in an exponential random graph model, the
edge-triangle model. Our paper is an extension
of this important first step; besides extending the models and
parameters under control we have provided a mathematical framework
of ``phases'' which we hope will be useful in motivating future
mathematical work in this subject. Our results show that all
models with ``attraction'' ($ \beta_2 > 0$) exhibit
a transition qualitatively like the gas/liquid transition: a first
order transition corresponding to a discontinuity in density, with a
second order critical point. In Theorem
7.1 of \cite{CD} Chatterjee and Diaconis suggest that, quite generally,
models with repulsion exhibit a transition
qualitatively like the solid/fluid transition, in
which one phase has nontrivial structure, as distinguished from the
``disordered''
Erd\H{o}s--R\'enyi graphs, which have independent edges. We have not yet
been able to extend our results to this regime, except for
$p$-star models with ``repulsion''
($ \beta_2 < 0$) which we prove do not exhibit a transition.
It is an important open problem to determine the qualitative behavior of
models based on an $H_2$ with chromatic number above 2.

\section*{Acknowledgements}

It is a pleasure to acknowledge useful discussions with Persi Diaconis
and Sourav Chatterjee introducing us to the subject of random graphs
and giving us early access to, and explanations of, their preprint
\cite{CD}, as well as suggestions for improving early drafts of this
paper.



\printaddresses

\end{document}